\documentclass[12pt]{article}

\usepackage{amsmath,amsthm,amssymb,graphicx,pb-diagram,geometry,mathrsfs}
\usepackage{graphicx,epstopdf}
\usepackage{amsmath,latexsym,amssymb,amsthm}
\usepackage{sectsty}
\usepackage[lofdepth,lotdepth]{subfig}
\usepackage{array}
\usepackage{hyperref}
\usepackage{mathtools}
\usepackage{blkarray, bigstrut}
\usepackage{enumerate}   
\usepackage{adjustbox}
\usepackage{tikz}
\usepackage{amsfonts}

\setlength{\parindent}{0.5in}
\geometry{top=1.5in,bottom=1in,left=1.5in,right=1in}

\pagestyle{myheadings}

\theoremstyle{plain}
\newtheorem{theorem}{Theorem}
\newtheorem{lemma}[theorem]{Lemma}
\newtheorem{corollary}[theorem]{Corollary}

\theoremstyle{definition}
\newtheorem{definition}[theorem]{Definition}

\newtheorem{problem}[theorem]{Problem}

\theoremstyle{remark}

%%%%%%%%%%%%%%%%%%%%%%%%%%%%%%%%%%%%%%%%%%%%%%%%%%%%%%%%

\begin{document}

\title{A diagram associated with the subconstituent algebra of a distance-regular graph}
\author{Supalak Sumalroj}
\author{Supalak  Sumalroj\\
	\small Department of Mathematics, Silpakorn University, 
	Nakhon Pathom, Thailand\\
	\small sumalroj\_s@silpakorn.edu
}
\date{}
\maketitle

\begin{abstract}
In this paper we consider a distance-regular graph $\Gamma$.
Fix a vertex $x$ of $\Gamma$ and consider the corresponding subconstituent algebra $T$.
The algebra $T$  is the $\mathbb{C}$-algebra generated by the Bose-Mesner algebra $M$ of $\Gamma$ and the dual Bose-Mesner algebra $M^*$ of $\Gamma$ with respect to $x$.
We consider the subspaces $M, M^*, MM^*, M^*M, MM^*M, M^*MM^*, \dots$ along with their intersections and sums.
In our notation, $MM^*$ means $Span\{RS|R\in M, S\in M^*\}$,  and so on.
We introduce a diagram that describes how these subspaces are related.
We describe in detail that part of the diagram up to $MM^*+M^*M$.
For each subspace $U$ shown in this part of the diagram, we display an orthogonal basis for $U$ along with the dimension of $U$.
For an edge $U\subseteq W$ from this part of the diagram, we display an orthogonal basis for the orthogonal complement of $U$ in $W$ along with the dimension of this orthogonal complement.\\

\noindent Keywords : subconstituent algebra, Terwilliger algebra, distance-regular graph \\

\noindent Math. Subj. Class.: 05E30  
\end{abstract}

%%%%%%%%%%%%%%%%%%%%%%%%%%%%%%%%%%%%%%%%%%%%%%%%%%%%%%%%%%%%%%%%%%%%%%%%%%
%%%%%%%%%%%%%%%%%%%%%%%%%%%%%%%%%%%%%%%%%%%%%%%%%%%%%%%%%%%%%%%%%%%%%%%%%%
%%%%%%%%%%%%% 												 %%%%%%%%%%%%%
%%%%%%%%%%%%%			   	    Introduction  	  		     %%%%%%%%%%%%%	
%%%%%%%%%%%%% 											     %%%%%%%%%%%%%   
%%%%%%%%%%%%%%%%%%%%%%%%%%%%%%%%%%%%%%%%%%%%%%%%%%%%%%%%%%%%%%%%%%%%%%%%%%
%%%%%%%%%%%%%%%%%%%%%%%%%%%%%%%%%%%%%%%%%%%%%%%%%%%%%%%%%%%%%%%%%%%%%%%%%%

\section{Introduction}

In this paper we consider a distance-regular graph $\Gamma$.
Fix a vertex $x$ of $\Gamma$ and consider the corresponding subconstituent algebra (or Terwilliger algebra) $T$ \cite{T_subconstituentI}.
The algebra $T$  is the $\mathbb{C}$-algebra generated by the Bose-Mesner algebra $M$ of $\Gamma$ and the dual Bose-Mesner algebra $M^*$ of $\Gamma$ with respect to $x$.
The algebra $T$ is finite-dimensional and  semisimple \cite{T_subconstituentI}.
So it is natural to compute the irreducible $T$-modules.
These modules are important in the study of 
hypercubes \cite{Go-hypercube,Miklavic_hypercubes},
dual polar graphs \cite{Lee_Tanaka,Worawannotai},
spin models \cite{Caughman_Wolff,Curtin_Nomura},
codes \cite{Gijswijt_Schrijver_Tanaka,Schrijver},
the bipartite property \cite{Caughman-bipartite-Qpoly,Caughman_1,Curtin,Lang,MacLean_Miklavic_endpt2,MacLean_Miklavic,MacLean_Miklavic_Penjic,MacLean_Terwilliger,Miklavic_Terwilliger},
the almost-bipartite property \cite{Caughman_MacLean_Terwilliger,Collins,Lang_Terwilliger}, 
the $Q$-polynomial property \cite{Caughman-bipartite-Qpoly,Cerzo,Dickie_Terwilliger,Gavrilyuk_Koolen,Lee,Lee_Q-poly,Miklavic_Terwilliger,Terwilliger-displacement}, and
the thin property \cite{Go_Terwilliger,MacLean_Terwilliger-Taut,Suzuki,Tanaka_Assmus,Terwilliger_inequality,Terwilliger-thin modules,Terwilliger_Weng,Terwilliger_Zitnik}.
%and other miscellaneous topics \cite{Ito_Terwilliger,Miklavic_negativeType}.

%For example, in \cite{Caughman_Wolff,Curtin_Nomura} the irreducible $T$-module is described under the assumption the Bose-Mesner algebra contain a spin model.
%The irreducible $T$-modules for strongly regular graphs \cite{Tomiyama_Yamazaki}, 
%bipartite distance-regular graphs  \cite{Curtin,MacLean_Miklavic,MacLean_Terwilliger},
%almost-bipartite distance-regular graphs \cite{Collins}, 
%bipartite $Q$-polynomial distance-regular graphs \cite{Caughman_1}, 
%almost-bipartite $Q$-polynomial distance-regular graphs \cite{Caughman_MacLean_Terwilliger}, hypercubes $H(d,2)$ \cite{Go} have been studied.
%In \cite{Worawannotai} the restrictions of $A$ and $A^*$ to irreducible $T$-module induce a Leonard system of dual $q$-Krawtchouk type.
%Information is obtained by computing irreducible $T$-module.

In this paper we discuss the algebra $T$ using a difference approach. 
We consider the subspaces $M, M^*, MM^*, M^*M, MM^*M, M^*MM^*, \dots$ along with their intersections and sums; see Figure \ref{diagram}.
We describe the diagram of Figure \ref{diagram} up to $MM^*+M^*M$.
For each subspace $U$ shown in this part of the diagram, we display an orthogonal basis for $U$ along with the dimension of $U$.
For an edge $U\subseteq W$ from this part of the diagram, we display an orthogonal basis for the orthogonal complement of $U$ in $W$ along with the dimension of this orthogonal complement.
Our main results are summarized in Theorems \ref{orthog basis and dim}, \ref{orthog basis and dim of orthog complement}. 
In the last part of the paper we summarize what is known about the part of diagram above $MM^*+M^*M$, and we give some open problems.

%The algebra $T$ is an important tool in studying spin models \cite{Caughman_Wolff,Curtin_Nomura},
%strongly regular graphs \cite{Tomiyama_Yamazaki},
%bipartite distance-regular graphs \cite{Curtin,MacLean_Miklavic,MacLean_Terwilliger},
%almost-bipartite distance-regular graphs \cite{Collins}, 
%bipartite $Q$-polynomial distance-regular graphs \cite{Caughman_1}, 
%almost-bipartite $Q$-polynomial distance-regular graphs \cite{Caughman_MacLean_Terwilliger}, hypercubes \cite{Go},
%dual polar graphs and Leonard systems of dual $q$-Krawtchouk type \cite{Worawannotai}.

%In \cite{Curtin,MacLean_Miklavic,MacLean_Terwilliger} detailed study of the irreducible $T$-modules for a bipartite distance-regular graph is shown.

%The irreducible $T$-modules for the hypercube $H(d,2)$ has been studied in  \cite{Go}.

%In \cite{Tomiyama_Yamazaki}, the algebra $T$ is described under the assumption a graph is strongly regular.

%%%%%%%%%%%%%%%%%%%%%%%%%%%%%%%%%%%%%%%%%%%%%%%%%%%%%%%%%%%%%%%%%%%%%%%%%%
%%%%%%%%%%%%%%%%%%%%%%%%%%%%%%%%%%%%%%%%%%%%%%%%%%%%%%%%%%%%%%%%%%%%%%%%%%
%%%%%%%%%%%%% 												 %%%%%%%%%%%%%
%%%%%%%%%%%%%			   	   Preliminaries	  		     %%%%%%%%%%%%%	
%%%%%%%%%%%%% 											     %%%%%%%%%%%%%   
%%%%%%%%%%%%%%%%%%%%%%%%%%%%%%%%%%%%%%%%%%%%%%%%%%%%%%%%%%%%%%%%%%%%%%%%%%
%%%%%%%%%%%%%%%%%%%%%%%%%%%%%%%%%%%%%%%%%%%%%%%%%%%%%%%%%%%%%%%%%%%%%%%%%%

\section{Preliminaries}

Let $X$ denote a nonempty finite set. 
Let ${{\rm{Mat}}}_X(\mathbb{C})$ denote the $\mathbb{C}$-algebra consisting of the matrices whose rows and columns are indexed by $X$ and whose entries are in $\mathbb{C}$. 
For $B\in{{\rm{Mat}}_X(\mathbb{C})}$ let $\overline{B}$, $B^t$, and $tr(B)$ denote the complex conjugate, the transpose, and the trace of $B$, respectively.
%Let $V=\mathbb{C}^X$ denote the vector space over $\mathbb{C}$ consisting of column vectors with coordinates indexed by $X$ and entries in $\mathbb{C}$.
%Observe that ${{\rm{Mat}}}_X(\mathbb{C})$ acts on $V$ by left multiplication. 
%We endow $V$ with the Hermitean inner product $(\;,\;)$ such that  $(u,v)=u^t\overline{v}$ for all $u,v\in{V}$.
%The inner product $(\;,\;)$ is positive definite.
%For $B\in{{\rm{Mat}}_X(\mathbb{C})}$ we obtain $(u,Bv)=(\overline{B}^tu,v)$ for all $u,v\in{V}$.
We endow ${\rm{Mat}}_X(\mathbb{C})$ with the Hermitean inner product $\langle \;,\;\rangle$  such that  $\langle R,S\rangle =tr(R^t\overline{S})$ for all $R, S\in{{\rm{Mat}}_X(\mathbb{C})}$.
The inner product $\langle \;,\;\rangle$ is positive definite.
%For $y\in{X}$, let $\hat{y}$ denote the vector of $V$ with $1$ in the $y$-th coordinate and $0$ in all other coordinates.
%Then $\{\hat{y}|y\in{X}\}$ forms an orthogonal basis for $V$.
Let $U, V$ denote subspaces of ${\rm{Mat}}_X(\mathbb{C})$ such that $U\subseteq V$.
The \textit{orthogonal complement} of $U$ in $V$ is defined by $U^\perp = \{v\in V|\langle v,u \rangle =0 \text{ for all } u\in U \}$.

Let $\Gamma=(X,\mathcal{E})$ denote a finite, undirected, connected graph, without loops or multiple edges, with vertex set $X$ and edge set $\mathcal{E}$.
Let $\partial$ denote the shortest path-length distance function for $\Gamma$.
Define the diameter $D:=\text{max}\{\partial(x,y)| x, y\in{X}\}$. 
For a vertex $x\in{X}$ and an integer $i\geq0$ define
$\Gamma_i(x)=\{y\in{X}|\partial(x,y)=i\}$.
For notational convenience abbreviate $\Gamma(x)=\Gamma_1(x)$.
For an integer $k\geq 0$, we say that $\Gamma$ is \textit{regular with valency $k$} whenever $|\Gamma(x)|=k$ for all $x\in{X}$.
We say that $\Gamma$ is \textit{distance-regular} whenever for all integers $h, i, j$ $(0\leq h, i, j\leq D)$ and $x, y \in{X}$ with $\partial(x, y) = h$, the number
$$p^h_{ij}:=|\Gamma_i(x)\cap\Gamma_j(y)|$$
is independent of $x$ and $y$.
The integers $p^h_{ij}$ are called the \textit{intersection numbers} of $\Gamma$.
From now on assume that $\Gamma$ is distance-regular with diameter $D\geq 3$.
%We abbreviate $c_i:=p^i_{1,i-1}$ $(1\leq i \leq D)$, $a_i:=p^i_{1i}$ $(0\leq i \leq D)$, $b_i:=p^i_{1,i+1}$ $(0\leq i \leq D-1)$, $k_i:=p^0_{ii}$ $(0\leq i \leq D)$, and $c_0=0$, $b_D=0$.
%Observe that $\Gamma$ is regular with valency $k=b_0$ and $c_i + a_i + b_i = k$ $(0\leq i \leq D)$.
We abbreviate $k_i:=p^0_{ii}$ $(0\leq i \leq D)$.
%By \cite[p. 127]{BCN} the following holds for $0\leq h, i, j\leq D$:
%(i) $p^h_{ij}=0$ if one of $h, i, j$ is greater than the sum of the other two; and (ii) $p^h_{ij}\neq0$ if one of $h, i, j$ equals the sum of the other two.
For $0\leq i \leq D$ let $A_i$ denote the matrix in ${\rm{Mat}}_X(\mathbb{C})$ with $(x,y)$-entry
\[ (A_i)_{xy}=
\begin{cases}
1       & \quad \text{if } \quad \partial(x,y)=i,\\
0  & \quad \text{if } \quad \partial(x,y)\neq i,\\
\end{cases}
\qquad \qquad x,y\in{X}.
\]
We call $A_i$ the \textit{$i$-th distance matrix} of $\Gamma$.
We call $A=A_1$ the \textit{adjacency matrix}  of $\Gamma$. 
Observe that $A_i$ is real and symmetric for $0\leq i \leq D$. 
Note that $A_0 = I$ is the identity matrix in ${\rm{Mat}}_X(\mathbb{C})$.
Observe that $\sum_{i=0}^{D} A_i=J$, where $J$ is the all-ones matrix in ${\rm{Mat}}_X(\mathbb{C})$.
Observe that for $0 \leq i, j \leq D$, 
\begin{align} \label{A_iA_j}
A_iA_j =\sum_{h=0}^{D} p^h_{ij}A_h.
\end{align}

\noindent For integers $h,i,j$ $(0\leq h,i,j \leq D)$ we have 
\begin{align} 
%\begin{split}
&p^h_{0j}=\delta_{hj} \label{p^h_{0j}} \\
%&p^h_{i0}=\delta_{hi} \label{p^h_{i0}}  \\
&p^0_{ij}=\delta_{ij}k_i \label{p^0_{ij}}  
%&p^h_{ij}=p^h_{ji} \label{p^h_{ij}=p^h_{ji}} \\
%%&\displaystyle\sum_{i=0}^{D} p^h_{ij}=k_j  \label{sum p^h_{ij}} \\
%&k_hp^h_{ij}  = k_ip^i_{hj} = k_jp^j_{ih}.  \label{k_hp^h_{ij}}
%\end{split}
\end{align}

%\noindent For $0 \leq i,j\leq D$ we have $A(A_i A_j)= (A A_i)A_j$. 
%This gives a recursion
%\begin{equation}\label{formula ai,bi,ci}
%c_{i+1}p^h_{i+1,j}+a_{i}p^h_{ij}+b_{i-1}p^h_{i-1,j}=
%c_{h}p^{h-1}_{ij}+a_{h}p^h_{ij}+b_{h}p^{h+1}_{ij} 
%\end{equation}
%\noindent that can be used to compute the intersection numbers. 

%We recall the Bose-Mesner algebra of $\Gamma$. 
Let $M$ denote the subalgebra of ${\rm{Mat}}_X(\mathbb{C})$ generated by $A$. 
By \cite[p. 44]{BCN} the matrices $A_0,A_1,...,A_D$ form a basis for $M$.
We call $M$ the \textit{Bose-Mesner algebra of} $\Gamma$. 
By \cite[p. 59, 64]{BI_AlgCombI}, $M$ has a basis $E_0,E_1,...,E_D$ such that 
(i) $E_0 = |X|^{-1}J$; 
(ii) $\sum_{i=0}^{D} E_i=I$;
(iii) $E_i^t=E_i$ $(0\leq i \leq D)$;
(iv) $\overline{E_i}=E_i$ $(0\leq i \leq D)$;
(v) $E_iE_j = \delta_{ij}E_i$ $(0\leq i,j \leq D)$.
The matrices $E_0, E_1,...,E_D$ are called the \textit{primitive idempotents} of $\Gamma$, and $E_0$ is called the \textit{trivial} idempotent.
%Since $E_0,E_1,...,E_D$ is a basis for $M$, there exist scalars $\theta_0,\theta_1,...,\theta_D\in{\mathbb{C}}$ such that 
%\begin{equation}\label{A} 
%\begin{split}
%A=\displaystyle\sum_{i=0}^{D}\theta_iE_i.
%\end{split}
%\end{equation}
%
%
%\noindent Since $E_iE_j = \delta_{ij}E_i$ and by (\ref{A}), we have $AE_i = E_iA = \theta_iE_i$ $(0 \leq i \leq D)$ and $\theta_0=k$.
%By \cite[p. 197]{BI_AlgCombI}, the scalars $\theta_0, \theta_1,...,\theta_D$ are real.
%We call the scalar $\theta_i$ the \textit{eigenvalue} of $\Gamma$ corresponding to $E_i$.
%Since $A$ generates $M$, the scalars $\theta_0, \theta_1,...,\theta_D$ are mutually distinct.
For $0\leq i \leq D$ let $m_i$ denote the rank of $E_i$.
%We call $m_i$ the \textit{multiplicity} of $\theta_i$.
%We observe that 
%\begin{equation}\label{V=sumE_i} 
%\begin{split}
%V=E_0V+E_1V+...+E_DV  \qquad \text{(orthogonal direct sum).}
%\end{split}
%\end{equation} 
%
For $0\leq i \leq D$ let $\theta_i$ denote an eigenvalue of $A$ associated with $E_i$.
\noindent Let $\lambda$ denote an indeterminate.
%Define polynomials $\{v_i\}_{i=0}^{D+1}$ in $\mathbb{C}[\lambda]$ by $v_0=1$, $v_1=\lambda$, and
%%
%\begin{align*}
%\lambda v_i=c_{i+1}v_{i+1}+a_iv_i+b_{i-1}v_{i-1} \qquad  (1\leq i\leq D),
%\end{align*}
%
%
%
%\noindent where $c_{D+1}:=1$.
%By \cite[p. 128]{BCN} and \cite[Lemma 3.8]{T_subconstituentI}, the following hold:
%(i) $deg$ $v_i=i$ $(0\leq i \leq D+1)$; 
%(ii) the coefficient of $\lambda^i$ in $v_i$ is $(c_1c_2\cdots c_i)^{-1}$ $(0\leq i \leq D+1)$;
%(iii) $v_i(A)=A_i$ $(0\leq i \leq D)$;
%(iv) $v_{D+1}(A)=0$;
%(v) the distinct eigenvalues of $\Gamma$ are precisely the zeros of $v_{D+1}$; call these $\theta_0, \theta_1, ..., \theta_D$.
%Define a matrix $B\in{{\rm{Mat}}_{D+1}(\mathbb{C})}$ as follows:
%
%\[
%B=\begin{pmatrix}
%a_0    & b_0       &          & 		& \bf{0}  	\\
%c_1    & a_1       &  b_1     &   		& 				\\
%& c_2       &  a_2     &  \ddots & 				\\
%&		   &  \ddots  & \ddots  & b_{D-1}		\\
%\bf{0}  &      &          & c_D 	& a_D
%\end{pmatrix}.
%\]
%We call $B$ the \textit{intersection matrix} of $\Gamma$.
%Note that $A$ has the same minimal polynomial as $B$.
%Moreover the minimal polynomial of $B$ is the characteristic polynomial of $B$.
%For an eigenvalue $\theta$ of $B$ we have $vB=\theta v$ where $v$ is a row vector $v=(v_0(\theta),v_1(\theta),...,v_D(\theta))$.
Define polynomials $\{u_i\}_{i=0}^{D}$ in $\mathbb{C}[\lambda]$ by 
$u_0=1$, $u_1=\lambda/k$, and
\begin{align*}
\lambda u_i=c_{i}u_{i-1}+a_iu_i+b_{i}u_{i+1} \qquad (1\leq i\leq D-1).
\end{align*}

%\noindent Observe that $u_i=v_i/k_i$ $(0\leq i \leq D)$.
%For an eigenvalue $\theta$ of $B$ we have $Bu=\theta u$ where $u$ is a column vector $u=(u_0(\theta),u_1(\theta),...,u_D(\theta))^t$.
\noindent By \cite[p. 131, 132]{BCN}, 
\begin{align} 
&A_j=k_j\displaystyle\sum_{i=0}^{D}u_j(\theta_i)E_i  &(0\leq j \leq D), \label{A_j}\\ 
&E_j=|X|^{-1}m_j\displaystyle\sum_{i=0}^{D} u_i(\theta_j)A_i  &(0\leq j \leq D). \label{E_j}
\end{align}

\noindent Since $E_iE_j = \delta_{ij}E_i$ and by (\ref{A_j}), (\ref{E_j}) we have $A_jE_i =k_ju_j(\theta_i)E_i= E_iA_j$ $(0 \leq i,j \leq D)$.
%We have $v_i(\theta_0)=k_i$ and $u_i(\theta_0)=1$ by (\ref{A_j}) and (\ref{E_j}).
%We mention polynomials $v_i, u_i$ $(0\leq i\leq D)$.
%Since  $u_i=v_i/k_i$, $E_iE_j = \delta_{ij}E_i$  $(0\leq i\leq D)$  and by (\ref{A_j}), the orthogonality relations are
By \cite[Theorem 3.5]{BI_AlgCombI} we have the orthogonality relations
\begin{align}
&\displaystyle\sum_{i=0}^{D}u_i(\theta_r)u_i(\theta_s)k_i=\delta_{rs}m^{-1}_r|X| &(0\leq r,s\leq D), \label{sum u_i} \\
&\displaystyle\sum_{r=0}^{D}u_i(\theta_r)u_j(\theta_r)m_r=\delta_{ij}k^{-1}_i|X|   &  (0\leq i,j\leq D). \label{sum u_i_1}
\end{align}

We recall the Krein parameters of $\Gamma$.
Let $\circ$ denote the entry-wise multiplication in ${\rm{Mat}}_X(\mathbb{C})$.
Note that $A_i\circ A_j = \delta_{ij}A_i$ for $0 \leq i, j \leq D$.
So $M$ is closed under $\circ$. 
By \cite[p. 48]{BCN}, there exist scalars $q^h_{ij}\in{\mathbb{C}}$ such that
\begin{equation}\label{E_i_circE_j} 
\begin{split}
E_i\circ E_j=|X|^{-1}\displaystyle\sum_{h=0}^{D} q^h_{ij}E_h  \quad \qquad (0\leq i,j \leq D).
\end{split}
\end{equation}

\noindent We call the $q^h_{ij}$ the \textit{Krein parameters} of $\Gamma$. 
By \cite[Proposition 4.1.5]{BCN}, these parameters are real and nonnegative for $0\leq h,i,j\leq D$.
%The graph $\Gamma$ is said to be \textit{$Q$-polynomial} with respect to the ordering $E_0,E_1,...,E_D$ whenever the following hold for $0\leq h,i,j\leq D$: 
%(i) $q^h_{ij}=0$ if one of $h, i, j$ is greater than the sum of the other two; and (ii) $q^h_{ij}\neq0$ if one of $h, i, j$ equals the sum of the other two.
%Let $E$ denote a primitive idempotent of $\Gamma$.
%We say that $\Gamma$ is \textit{$Q$-polynomial with respect to} $E$ whenever there exists a $Q$-polynomial ordering $E_0,E_1,...,E_D$ of the primitive idempotents such that $E=E_1$.

We recall the dual Bose-Mesner algebra of $\Gamma$.
Fix a vertex $x\in{X}$. 
For $0 \leq i \leq D$ let $E_i^*=E_i^*(x)$ denote the diagonal matrix in ${\rm{Mat}}_X(\mathbb{C})$ with $(y,y)$-entry
\[ (E_i^*)_{yy}=
\begin{cases}
1       & \quad \text{if } \quad \partial(x,y)=i,\\
0  & \quad \text{if } \quad \partial(x,y)\neq i,\\
\end{cases}
\qquad \qquad y\in{X}.
\]
We call $E_i^*$ the \textit{$i$-th dual idempotent} of $\Gamma$ with respect to $x$. 
Observe that (i) $\sum_{i=0}^{D} E_i^*=I$;
(ii)  $E_i^{*t}=E_i^*$ $(0\leq i \leq D)$;
(iii) $\overline{E_i^*}=E_i^*$ $(0\leq i \leq D)$; 
(iv) $E_i^*E_j^*=\delta_{ij}E_i^*$ $(0\leq i,j \leq D)$.
By construction $E_0^*,E_1^*,...,E_D^*$ are linearly independent.
Let $M^*=M^*(x)$ denote the subalgebra of ${\rm{Mat}}_X(\mathbb{C})$ with  basis $E_0^*,E_1^*,...,E_D^*$.
We call $M^*$ the \textit{dual Bose-Mesner algebra} of $\Gamma$ with respect
to $x$.
%Note that $M^*$ is commutative and semi-simple.

We now recall the dual distance matrices of $\Gamma$.
For $0 \leq i \leq D$ let $A_i^*=A_i^*(x)$ denote the diagonal matrix in ${\rm{Mat}}_X(\mathbb{C})$ with $(y,y)$-entry
\begin{equation}\label{A_i^*} 
\begin{split}
(A_i^*)_{yy}=|X|(E_i)_{xy} \qquad \qquad \qquad y\in{X}.
\end{split}
\end{equation} 
We call $A_i^*$ the \textit{dual distance matrix} of $\Gamma$ with respect to $x$ and $E_i$.
By \cite [p. 379]{T_subconstituentI}, the matrices $A_0^*,A_1^*,...,A_D^*$ 
form a basis for $M^*$.
Observe that
(i) $A_0^* = I$; 
(ii) $\sum_{i=0}^{D} A_i^*=|X|E_0^*$;
(iii) $A_i^{*t}=A_i^*$ $(0\leq i \leq D)$;
(iv) $\overline{A_i^*}=A_i^*$ $(0\leq i \leq D)$;
(v) $A_i^*A_j^* = \sum_{h=0}^{D} q^h_{ij}A_h^*$ $(0\leq i,j \leq D)$.
From (\ref{A_j}), (\ref{E_j}) we have
\begin{align} 
&A_j^*=m_j\displaystyle\sum_{i=0}^{D}u_i(\theta_j)E_i^*  &(0\leq j \leq D), \label{A_j^*} \\
&E_j^*=|X|^{-1}k_j\displaystyle\sum_{i=0}^{D} u_j(\theta_i)A_i^*  &(0\leq j \leq D). \label{E_j^*} 
\end{align}

\section{The subconstituent algebra $T$}

Let $T$ denote the subalgebra of ${\rm{Mat}}_X(\mathbb{C})$ generated by $M,M^*$.
The algebra $T$ is called the \textit{subconstituent algebra} (or
\textit{Terwilliger algebra}) \cite{T_subconstituentI}.
In order to describe $T$, we consider how $M,M^*$ are related.
We will use the following notation.
For any two subspaces $\mathcal{R},\mathcal{S}$ of  ${\rm{Mat}}_X(\mathbb{C})$ we define $\mathcal{R} \mathcal{S}=Span\{RS|R\in \mathcal{R}, S\in \mathcal{S}\}$. 
%We have $MM^*=Span\{RS|R\in M, S\in M^*\}$ and $M^*M=Span\{SR|R\in M, S\in M^*\}$.
Consider the subspaces $M, M^*, MM^*, M^*M, MM^*M, M^*MM^*, \dots$ along with their intersections and sums.
To describe the inclusions among the resulting subspaces we draw a diagram; see Figure \ref{diagram}.
In this diagram, a line segment that goes upward from $U$ to $W$ means that  $W$ contains $U$.

\newpage
%\begin{center}
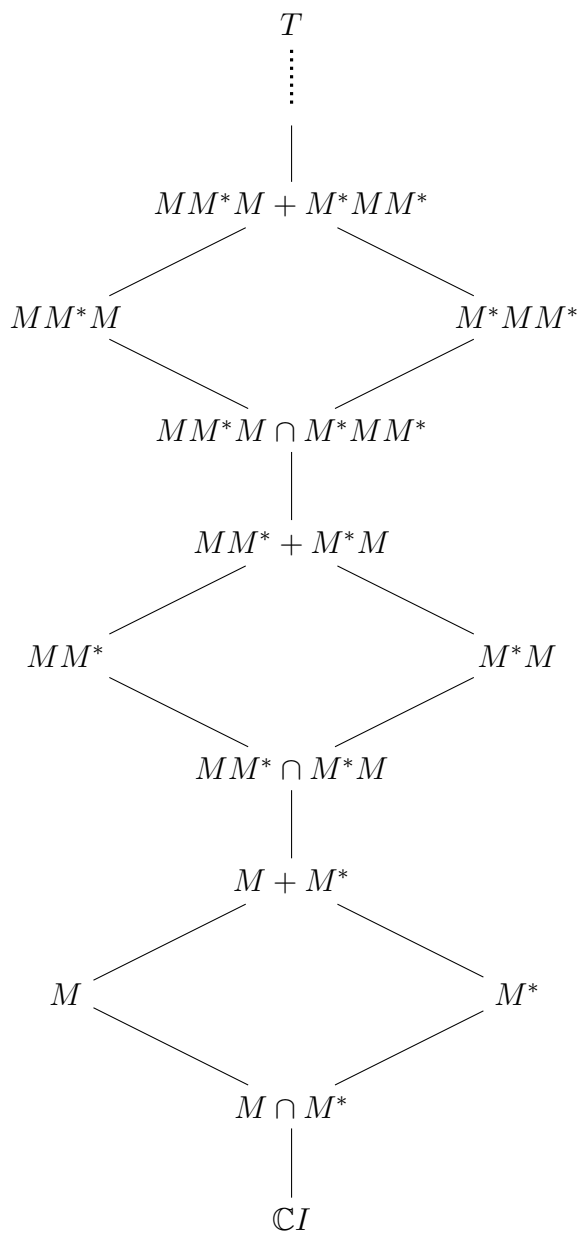
\begin{figure} [!]
\centering
\begin{tikzpicture}
\node (1) at (3,3) {$M^*MM^*$};
\node (2) at (-3,3) {$MM^*M$};
\node (3) at (0,1.5) {$MM^*M\cap M^*MM^*$};
\node (4) at (0,4.5) {$MM^*M + M^*MM^*$};
\node (5) at (0,5.7) {};
\node (6) at (0,6.9) {$T$};
\node (7) at (0,-9) {$\mathbb{C}I$};
\node (00) at (0,0) {$MM^*+M^*M$};
\node (11) at (3,-1.5)  {$M^*M$};
\node (12) at (-3,-1.5)  {$MM^*$}; %Corrected node name
\node (20) at (0,-3) {$MM^*\cap M^*M$};
\node (30) at (0,-4.5) {$M+M^*$};
\node (41) at (3,-6)  {$M^*$};
\node (42) at (-3,-6)  {$M$};
\node (50) at (0,-7.5) {$M\cap M^*$};
\draw (00) -- (11);
\draw (00) -- (12);
\draw (11) -- (20);
\draw (12) -- (20);
\draw (20) -- (30);
\draw (30) -- (41);
\draw (30) -- (42);
\draw (41) -- (50);
\draw (42) -- (50);
\draw (3) -- (00);
\draw (3) -- (1);
\draw (3) -- (2);
\draw (4) -- (1);
\draw (4) -- (2);
\draw (4) -- (5);
\draw (5) edge [very thick, dotted] (6);
\draw (50) -- (7);
\end{tikzpicture}
\caption{Inclusion diagram} \label{diagram}
\end{figure}
%\end{center}

Consider the above diagram.
For each subspace $U$ shown in the diagram, we desire to find an orthogonal basis for $U$ and the dimension of $U$.
Also, for each edge $U\subseteq W$ shown in the diagram, we desire to find an orthogonal basis for the orthogonal compliment of $U$ in $W$ along with the dimension of this orthogonal compliment.
We accomplish these goals for that part of the diagram up to $MM^*+M^*M$.
Our main results are summarized in Theorems \ref{orthog basis and dim}, \ref{orthog basis and dim of orthog complement}.
Before we get started, we recall a few inner product formulas.

\begin{lemma}\label{formula} $\rm{(See ~\cite[Lemma~ 3.2]{T_subconstituentI}.)}$ For $0\leq h, i, j, r, s, t \leq D$,
	\begin{enumerate}[(i)]
		\item $\langle E_i^*A_jE_h^*,E_r^*A_sE_t^*\rangle =\delta_{ir}\delta_{js}\delta_{ht}k_hp^h_{ij}$,
		\item $\langle E_iA_j^*E_h,E_rA_s^*E_t\rangle =\delta_{ir}\delta_{js}\delta_{ht}m_hq^h_{ij}$.
	\end{enumerate}
\end{lemma}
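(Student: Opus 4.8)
The plan is to evaluate both inner products straight from the definition $\langle R,S\rangle=tr(R^t\overline{S})$, exploiting that all of $E_i^*,A_j,E_h,A_j^*$ are real and symmetric; for real matrices this reads $\langle R,S\rangle=\sum_{y,z\in X}R_{yz}S_{yz}$. In each part the work divides into showing that the summand vanishes identically unless certain indices agree (which yields the Kronecker deltas), and then evaluating the surviving sum.

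For (i), I would first observe that $(E_i^*A_jE_h^*)_{yz}=1$ precisely when $\partial(x,y)=i$, $\partial(y,z)=j$ and $\partial(x,z)=h$, and is $0$ otherwise. Multiplying this $0/1$ indicator by the analogous one for $E_r^*A_sE_t^*$ shows the product is zero unless $(i,j,h)=(r,s,t)$, giving the factor $\delta_{ir}\delta_{js}\delta_{ht}$. When these equalities hold, $\langle E_i^*A_jE_h^*,E_r^*A_sE_t^*\rangle$ is the number of pairs $(y,z)$ with $y\in\Gamma_i(x)$, $z\in\Gamma_h(x)$, $\partial(y,z)=j$. Summing over the $k_h=|\Gamma_h(x)|$ choices of $z$ (each satisfying $\partial(x,z)=h$) and noting that for each such $z$ the number of admissible $y$ equals $|\Gamma_i(x)\cap\Gamma_j(z)|=p^h_{ij}$ by distance-regularity yields the value $k_hp^h_{ij}$. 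This step is pure bookkeeping and I anticipate no difficulty.

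For (ii) the matrix $A_j^*$ is diagonal rather than combinatorial, so I would instead use $(A_j^*)_{ww}=|X|(E_j)_{xw}$ to write $(E_iA_j^*E_h)_{yz}=|X|\sum_w(E_i)_{yw}(E_j)_{xw}(E_h)_{wz}$, substitute this into $\langle E_iA_j^*E_h,E_rA_s^*E_t\rangle=\sum_{y,z}(E_iA_j^*E_h)_{yz}(E_rA_s^*E_t)_{yz}$, and carry out the sums. The sum over $y$ collapses by $E_iE_r=\delta_{ir}E_i$ and the sum over $z$ by $E_hE_t=\delta_{ht}E_h$ (using symmetry of the idempotents), producing the factor $\delta_{ir}\delta_{ht}$ and leaving $|X|\sum_{w,w'}(E_j)_{xw}(E_s)_{xw'}(E_i\circ E_h)_{ww'}$. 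Expanding $E_i\circ E_h$ by (\ref{E_i_circE_j}), then using $E_jE_gE_s=\delta_{jg}\delta_{gs}E_j$ together with $(E_j)_{xx}=m_j/|X|$, reduces everything to $\delta_{js}\,m_jq^j_{ih}$. Finally I would rewrite $m_jq^j_{ih}$ as $m_hq^h_{ij}$ using that $m_hq^h_{ij}$ is invariant under every permutation of $h,i,j$; this symmetry follows from (\ref{E_i_circE_j}), the commutativity of $\circ$, and $\langle E_h,E_h\rangle=tr(E_h)=m_h$, since then $\langle E_i\circ E_h,E_j\rangle=|X|^{-1}m_jq^j_{ih}$ is visibly a symmetric function of $i,h,j$.

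The one delicate point is exactly this permutation-symmetry of the Krein parameters: without it the computation of (ii) terminates at $m_jq^j_{ih}$ rather than the stated $m_hq^h_{ij}$, so I would record it as a short preliminary observation. Everything else is routine manipulation of the relations $E_iE_j=\delta_{ij}E_i$, the diagonal description of $A_j^*$, and the entrywise-product formula (\ref{E_i_circE_j}).
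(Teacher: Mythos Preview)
The paper does not actually prove this lemma; it simply cites \cite[Lemma~3.2]{T_subconstituentI}. Your argument is the standard one and is correct in outline.

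One minor bookkeeping slip in part~(ii): after carrying out the $y$- and $z$-sums the surviving scalar factor is $|X|^{2}$, not $|X|$, since each of $A_j^{*}$ and $A_s^{*}$ contributes a factor of $|X|$. With that correction the remaining steps (expansion via (\ref{E_i_circE_j}), then $E_jE_gE_s=\delta_{jg}\delta_{gs}E_j$ and $(E_j)_{xx}=m_j/|X|$) produce exactly $\delta_{js}\,m_jq^{j}_{ih}$ as you state. Your reduction to the permutation symmetry $m_jq^{j}_{ih}=m_hq^{h}_{ij}$ is also fine; it becomes completely transparent once one writes $\langle E_i\circ E_h,E_j\rangle=\sum_{y,z}(E_i)_{yz}(E_h)_{yz}(E_j)_{yz}$, which is manifestly symmetric in all three indices.
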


The following result is well-known.

\begin{corollary} \label{well-known}
	For $0\leq h,i,j \leq D$,
		\begin{enumerate}[(i)]
		\item $E_i^*A_hE_j^*=0$ if and only if $p^h_{ij}=0$,
		\item $E_iA_h^*E_j=0$ if and only if $q^h_{ij}=0$.
	\end{enumerate}
\end{corollary}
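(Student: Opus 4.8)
The plan is to deduce both statements directly from Lemma \ref{formula} together with the fact that the inner product $\langle\,,\,\rangle$ on ${\rm Mat}_X(\mathbb{C})$ is positive definite. Recall that for a positive definite Hermitean form, a vector $v$ is zero if and only if $\langle v,v\rangle = 0$.

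For part (i), apply Lemma \ref{formula}(i) with $r=i$, $s=h$, $t=j$. This yields
\[
\langle E_i^*A_hE_j^*,\,E_i^*A_hE_j^*\rangle = k_h\,p^h_{ij}.
\]
By positive definiteness, $E_i^*A_hE_j^* = 0$ if and only if $k_h p^h_{ij} = 0$. Now observe that $k_h = p^0_{hh} = |\Gamma_h(x)|$, which is nonzero for $0\le h\le D$ since $\Gamma$ is connected with diameter $D$, so $\Gamma_h(x)\neq\varnothing$. Hence $k_h p^h_{ij} = 0$ if and only if $p^h_{ij}=0$, and (i) follows. For part (ii), the argument is entirely parallel: apply Lemma \ref{formula}(ii) with $r=i$, $s=h$, $t=j$ to get $\langle E_iA_h^*E_j, E_iA_h^*E_j\rangle = m_h q^h_{ij}$, and note that $m_h$, the rank of the nonzero idempotent $E_h$, satisfies $m_h\ge 1$; positive definiteness then gives $E_iA_h^*E_j = 0$ iff $q^h_{ij}=0$.

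There is no real obstacle here; the only point requiring a word of justification is that the scalars $k_h$ and $m_h$ are strictly positive, so that they can be cancelled. This is immediate from the definitions recalled in the Preliminaries. The proof is therefore a one-line consequence of Lemma \ref{formula}, which is why the result is labelled well-known.
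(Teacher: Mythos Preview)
Your argument is correct and is precisely the intended derivation: the paper offers no proof beyond placing the corollary immediately after Lemma~\ref{formula}, so the implicit argument is exactly the one you give via positive definiteness of $\langle\,,\,\rangle$. One small remark on bookkeeping: when you specialize Lemma~\ref{formula}(i) you must also relabel the lemma's first triple of indices (its $j$ and $h$ get swapped), which literally yields $\lVert E_i^*A_hE_j^*\rVert^2 = k_j\,p^{j}_{ih}$ rather than $k_h\,p^{h}_{ij}$; the two quantities coincide by the standard symmetries $p^{h}_{ij}=p^{h}_{ji}$ and $k_h p^{h}_{ij}=k_i p^{i}_{hj}$, so your displayed formula is in fact correct (and likewise in~(ii) with $m_h q^{h}_{ij}=m_j q^{j}_{ih}$).
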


\begin{lemma} \label{<>formula}  $\rm{(See ~\cite[Lemma~ 10]{Sumalroj}.)}$
	For $0\leq h, i, j, r, s, t \leq D$,
	\begin{equation*}
	\langle A_iE_j^*A_h,A_rE_s^*A_t\rangle =\displaystyle\sum_{\ell=0}^{D} k_\ell p^\ell_{ir}p^\ell_{js}p^\ell_{ht}.
	\end{equation*}
\end{lemma}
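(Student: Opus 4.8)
The plan is to reduce the statement to a vertex-counting identity. Since all six matrices appearing here are real --- the distance matrices $A_i$ are real symmetric, and $E_j^*$, $E_s^*$ are real diagonal --- the Hermitean inner product collapses to $\langle R,S\rangle=tr(R^tS)=\sum_{u,v\in X}R_{uv}S_{uv}$. So first I would apply this with $R=A_iE_j^*A_h$ and $S=A_rE_s^*A_t$. Using the definition of $E_j^*$, the $(u,v)$-entry of $A_iE_j^*A_h$ is $|\Gamma_i(u)\cap\Gamma_j(x)\cap\Gamma_h(v)|$, that is, the number of vertices $w$ with $\partial(u,w)=i$, $\partial(x,w)=j$, $\partial(w,v)=h$; likewise $(A_rE_s^*A_t)_{uv}$ counts vertices $w'$ with $\partial(u,w')=r$, $\partial(x,w')=s$, $\partial(w',v)=t$. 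Multiplying and summing over $u,v$, the left-hand side becomes a single sum over quadruples $(u,v,w,w')$ of a product of six distance indicator conditions.

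Next I would rearrange this sum, summing over the ``outer'' vertices $u$ and $v$ first with $w,w'$ held fixed. The constraints on $u$ are $\partial(u,w)=i$ and $\partial(u,w')=r$; since $\Gamma$ is distance-regular the number of such $u$ equals $p^{\partial(w,w')}_{ir}$, and similarly the number of admissible $v$ equals $p^{\partial(w,w')}_{ht}$. This leaves
\[
\langle A_iE_j^*A_h,\,A_rE_s^*A_t\rangle=\sum_{\ell=0}^{D}p^\ell_{ir}\,p^\ell_{ht}\cdot\big|\{(w,w'):w\in\Gamma_j(x),\ w'\in\Gamma_s(x),\ \partial(w,w')=\ell\}\big|.
\]
Finally I would evaluate the displayed count: choosing $w$ first gives $k_j$ options, and then $w'$ must lie in $\Gamma_s(x)\cap\Gamma_\ell(w)$, so (again by distance-regularity) the count is $k_j\,p^j_{s\ell}$. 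Applying the well-known symmetry $k_ap^a_{bc}=k_bp^b_{ac}=k_cp^c_{ab}$ --- which holds because $tr(A_aA_bA_c)=\langle A_a,A_bA_c\rangle=k_a p^a_{bc}|X|$ by (\ref{A_iA_j}) while $tr(A_aA_bA_c)$ is symmetric in $a,b,c$ since the $A_i$ are real symmetric --- we get $k_j p^j_{s\ell}=k_\ell p^\ell_{js}$, whence the right-hand side equals $\sum_{\ell=0}^{D} k_\ell\,p^\ell_{ir}\,p^\ell_{js}\,p^\ell_{ht}$, as desired.

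A slightly more streamlined variant writes $E_j^*=\sum_{w\in\Gamma_j(x)}\hat{e}_w\hat{e}_w^{\,t}$ so that $A_iE_j^*A_h=\sum_{w\in\Gamma_j(x)}(A_i\hat{e}_w)(A_h\hat{e}_w)^t$ is a sum of rank-one matrices; then $\langle ab^t,cd^t\rangle=(a^tc)(b^td)$ yields directly $\sum_{w,w'}(A_iA_r)_{ww'}(A_hA_t)_{ww'}$, the same intermediate quantity, and one finishes identically. I do not expect a genuine obstacle here --- the whole argument is a controlled rearrangement of a multiple sum --- but the bookkeeping is the only delicate point: one must keep the six indices $h,i,j,r,s,t$ in their correct slots throughout, and invoke distance-regularity precisely at the two places where a count is asserted to depend only on $\partial(w,w')$ (for the $p^\ell_{ir}$, $p^\ell_{ht}$ factors) and only on $j$ (for $p^j_{s\ell}$), together with the symmetry identity to pass to the stated symmetric form.
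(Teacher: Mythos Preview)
Your argument is correct. Note, however, that the paper does not actually supply a proof of this lemma: it is stated with a citation to \cite[Lemma~10]{Sumalroj} and then used as a black box, so there is no in-text proof to compare against. Your direct combinatorial computation --- expanding the trace entrywise, reading $(A_iE_j^*A_h)_{uv}$ as the number of $w$ at prescribed distances from $u$, $x$, $v$, rearranging the resulting sum over $(u,v,w,w')$ by summing $u$ and $v$ first, and finishing with the standard identity $k_jp^j_{s\ell}=k_\ell p^\ell_{js}$ --- is the natural proof and goes through without difficulty; the rank-one variant you sketch is an equally valid packaging of the same calculation.
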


%%%%%%%%%%%%%%%%%%%%%%%%%%%%%%%%%%%%%%%%%%%%%%%%%%%%%%%%%%%%%%%%%%%%%%%%%%
%%%%%%%%%%%%%%%%%%%%%%%%%%%%%%%%%%%%%%%%%%%%%%%%%%%%%%%%%%%%%%%%%%%%%%%%%%
%%%%%%%%%%%%% 												 %%%%%%%%%%%%%
%%%%%%%%%%%%%		The subspace $M+M^*$	   	     	     %%%%%%%%%%%%%	
%%%%%%%%%%%%% 										         %%%%%%%%%%%%%   
%%%%%%%%%%%%%%%%%%%%%%%%%%%%%%%%%%%%%%%%%%%%%%%%%%%%%%%%%%%%%%%%%%%%%%%%%%
%%%%%%%%%%%%%%%%%%%%%%%%%%%%%%%%%%%%%%%%%%%%%%%%%%%%%%%%%%%%%%%%%%%%%%%%%%

\section{The subspace $M+M^*$}

Our goal in this section is to analyze the inclusion diagram up to $M+M^*$.

%In this section we  compute the inner product between certain pairs of matrices in ${{\rm{Mat}}}_X(\mathbb{C})$.
%
%In this section we describe the intersection and the sum of $M$ and $M^*$.
%
%To prepare for the main result this section we compute some inner products in ${{\rm{Mat}}}_X(\mathbb{C})$.

\begin{lemma} \label{trace}
	For $0\leq i \leq D$,
	\begin{enumerate}[(i)]
		\item $tr(A_i)=\delta_{0i}|X|$,
		\item $tr(E_i)=m_i$,
		\item $tr(A^*_i)=\delta_{0i}|X|$,
		\item $tr(E^*_i)=k_i$.
	\end{enumerate}
\end{lemma}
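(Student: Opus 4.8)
The plan is to establish each of the four trace formulas by a short direct computation, reading off the diagonal entries of the relevant matrix from its definition.

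For (i) and (iv) I would argue purely combinatorially. In (i), the $(y,y)$-entry of $A_i$ equals $1$ exactly when $\partial(y,y)=i$, i.e. when $i=0$, so $tr(A_i)=\sum_{y\in X}(A_i)_{yy}=\delta_{0i}\,|X|$. In (iv), the $(y,y)$-entry of $E_i^*$ equals $1$ exactly when $\partial(x,y)=i$, so $tr(E_i^*)$ counts the vertices at distance $i$ from $x$; that is, $tr(E_i^*)=|\Gamma_i(x)|$, and taking $h=0$ in the definition of the intersection numbers gives $|\Gamma_i(x)|=|\Gamma_i(x)\cap\Gamma_i(x)|=p^0_{ii}=k_i$ (equivalently one may cite (\ref{p^0_{ij}})).

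For (ii) I would use that $E_i$ is a real symmetric idempotent: properties (iii), (iv), (v) of the primitive idempotents give $\overline{E_i}=E_i^t=E_i$ and $E_i^2=E_i$. Hence $E_i$ is an orthogonal projection, so it is diagonalizable over $\mathbb{C}$ with every eigenvalue equal to $0$ or $1$, the eigenvalue $1$ occurring with multiplicity $\mathrm{rank}(E_i)=m_i$. Therefore $tr(E_i)=m_i$.

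For (iii) I would compute, using (\ref{A_i^*}), that $tr(A_i^*)=\sum_{y\in X}(A_i^*)_{yy}=|X|\sum_{y\in X}(E_i)_{xy}$, and recognize the inner sum as the $(x,x)$-entry of $E_iJ$. Since $J=|X|E_0$ and $E_iE_0=\delta_{i0}E_i$, we get $E_iJ=\delta_{i0}J$, whose $(x,x)$-entry is $\delta_{i0}$. Thus $tr(A_i^*)=\delta_{0i}\,|X|$. The only point requiring any care is to obtain the reduction $E_iJ=\delta_{i0}J$ from the stated properties of the primitive idempotents rather than assuming it; once that is noted, no part of the lemma presents a genuine obstacle.
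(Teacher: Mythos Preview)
Your proof is correct and follows essentially the same route as the paper: each item is read off directly from the definition of the matrix in question, with (ii) using that an idempotent is diagonalizable so that $tr(E_i)=\mathrm{rank}(E_i)=m_i$. Your argument for (iii) via $E_iJ=|X|E_iE_0=\delta_{i0}J$ is actually more explicit than the paper's one-line ``follows from the definition of $A_i^*$,'' but it is exactly the computation that justification is hiding.
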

\begin{proof}
	$(i)$ Follows from the definition of $A_i$.\\
	\noindent $(ii)$ Since $E_i$ is diagonalizable, we have $tr(E_i)=rank(E_i)=m_i$.\\
	\noindent $(iii)$  Follows from the definition of $A^*_i$.\\
	\noindent $(iv)$  Follows from the definition of $E^*_i$.
\end{proof}

\begin{lemma} \label{orthogonal} 
	For $0\leq  i, j \leq D$,
	\begin{enumerate}[(i)]
		\item $\langle A_i,A_j \rangle =\delta_{ij}k_i|X|$,
		\item $\langle E_i,E_j\rangle =\delta_{ij}m_i$,
		\item $\langle A^*_i,A^*_j \rangle =\delta_{ij}m_i|X|$,
		\item $\langle E^*_i,E^*_j\rangle =\delta_{ij}k_i$.
	\end{enumerate}
\end{lemma}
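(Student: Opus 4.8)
The plan is to prove each of the four inner product formulas by a direct computation, exploiting the spectral decompositions that relate the four bases $\{A_i\}$, $\{E_i\}$, $\{A_i^*\}$, $\{E_i^*\}$. The key observation is that parts (ii) and (iv) are essentially trivial given Lemma \ref{trace}, and parts (i) and (iii) then follow from parts (ii) and (iv) by substituting the change-of-basis formulas (\ref{A_j}), (\ref{A_j^*}) and using the orthogonality relations (\ref{sum u_i}), (\ref{sum u_i_1}).

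First I would establish (ii). Since $E_i$ is real and symmetric, $\langle E_i, E_j \rangle = tr(E_i^t \overline{E_j}) = tr(E_i E_j) = \delta_{ij} tr(E_i) = \delta_{ij} m_i$, where the third equality is property (v) of the primitive idempotents and the last is Lemma \ref{trace}(ii). The argument for (iv) is identical: $\langle E_i^*, E_j^* \rangle = tr(E_i^* E_j^*) = \delta_{ij} tr(E_i^*) = \delta_{ij} k_i$, using the analogous orthogonality property of the dual idempotents and Lemma \ref{trace}(iv).

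Next I would derive (i) from (ii). Using (\ref{A_j}) to write $A_i = k_i \sum_{r=0}^{D} u_i(\theta_r) E_r$ and $A_j = k_j \sum_{s=0}^{D} u_j(\theta_s) E_s$, bilinearity of the inner product together with part (ii) gives
\begin{align*}
\langle A_i, A_j \rangle = k_i k_j \sum_{r=0}^{D} \sum_{s=0}^{D} u_i(\theta_r) u_j(\theta_s) \langle E_r, E_s \rangle = k_i k_j \sum_{r=0}^{D} u_i(\theta_r) u_j(\theta_r) m_r.
\end{align*}
By the orthogonality relation (\ref{sum u_i_1}) the sum equals $\delta_{ij} k_i^{-1} |X|$, so $\langle A_i, A_j \rangle = k_i k_j \delta_{ij} k_i^{-1} |X| = \delta_{ij} k_i |X|$. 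The same method proves (iii): expand $A_i^*$ and $A_j^*$ via (\ref{A_j^*}) as $A_i^* = m_i \sum_{r} u_r(\theta_i) E_r^*$, apply part (iv), and then invoke (\ref{sum u_i}) to collapse the double sum. I do not expect any genuine obstacle here; the only point requiring a little care is keeping track of which orthogonality relation—(\ref{sum u_i}) versus (\ref{sum u_i_1})—is needed in each case, since the roles of the eigenvalue index and the matrix index are swapped between the primal and dual settings.
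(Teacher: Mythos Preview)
Your proof is correct. For parts (ii) and (iv) you argue exactly as the paper does. For parts (i) and (iii), however, you take a genuinely different route: you deduce them from (ii) and (iv) by expanding $A_i$ in the basis $\{E_r\}$ (respectively $A_i^*$ in $\{E_r^*\}$) via (\ref{A_j}), (\ref{A_j^*}) and then invoking the orthogonality relations (\ref{sum u_i_1}), (\ref{sum u_i}). The paper instead handles (i) directly from the multiplication formula $A_iA_j=\sum_h p^h_{ij}A_h$ (equation (\ref{A_iA_j})) together with Lemma~\ref{trace}(i) and the identity $p^0_{ij}=\delta_{ij}k_i$ from (\ref{p^0_{ij}}); part (iii) is then analogous using the Krein-parameter expansion $A_i^*A_j^*=\sum_h q^h_{ij}A_h^*$. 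The paper's argument is the more elementary of the two, since it never appeals to the character-table orthogonality relations; your approach is slightly less direct but has the virtue of making explicit that the orthogonality of $\{A_i\}$ and of $\{E_i\}$ are the same fact read through the change of basis.
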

\begin{proof}
	$(i)$ Use (\ref{A_iA_j}) and Lemma \ref{trace}.\\
	$(ii)$ By Lemma \ref{trace} and since $E_iE_j=\delta_{ij}E_i$.\\
	$(iii),(iv)$ Similar to the proofs of $(i), (ii)$.
\end{proof}

\begin{lemma} \label{orthog basis M,M^*}
	Each of the following is an orthogonal basis for $M$:
	$$\{A_i\}^{D}_{i=0}, \qquad  \qquad \{E_i\}^{D}_{i=0}.$$
	Moreover, each of the following is an orthogonal basis for $M^*$:
	$$\{A^*_i\}^{D}_{i=0}, \qquad  \qquad \{E^*_i\}^{D}_{i=0}.$$
\end{lemma}
\begin{proof}
	Immediate from Lemma \ref{orthogonal}.
\end{proof}

%By Lemma \ref{orthogonal}, the basis $\{A_i\}^{D}_{i=0}$ is orthogonal and the basis $\{E_i\}^{D}_{i=0}$ is orthogonal. 
%Similarly, the basis $\{A^*_i\}^{D}_{i=0}$ is orthogonal and the basis $\{E^*_i\}^{D}_{i=0}$ is orthogonal. 

\begin{lemma} \label{< A_i,A^*_j>}
	For $0\leq i,j\leq D$, $\langle A_i,A^*_j \rangle =\delta_{i0}\delta_{0j}|X|k_i$.
\end{lemma}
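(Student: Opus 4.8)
The plan is to compute the inner product $\langle A_i, A_j^*\rangle = tr(A_i^t \overline{A_j^*}) = tr(A_i A_j^*)$, using that $A_i$ is real symmetric and $A_j^*$ is real and diagonal. The key observation is that $A_j^*$ is diagonal, so the trace of the product $A_i A_j^*$ only picks up the diagonal entries of $A_i$ weighted by the diagonal entries of $A_j^*$. Explicitly, $tr(A_i A_j^*) = \sum_{y\in X} (A_i)_{yy}(A_j^*)_{yy}$. Now $(A_i)_{yy} = \delta_{i0}$ since $\partial(y,y)=0$, so the sum vanishes unless $i=0$, in which case $A_0 = I$ and we get $tr(A_j^*) = \delta_{0j}|X|$ by Lemma~\ref{trace}(iii). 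Combining, $\langle A_i, A_j^*\rangle = \delta_{i0}\,tr(A_j^*) = \delta_{i0}\delta_{0j}|X|$.

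To match the stated form $\delta_{i0}\delta_{0j}|X|k_i$, I note that when $i=0$ we have $k_0 = p^0_{00} = 1$, so $\delta_{i0}\delta_{0j}|X| = \delta_{i0}\delta_{0j}|X|k_i$; the factor $k_i$ is harmless because it is only evaluated at $i=0$. Alternatively, one can avoid the trace computation entirely: write $A_i = k_i\sum_r u_i(\theta_r)E_r$ via (\ref{A_j}) and $A_j^* = m_j\sum_s u_s(\theta_j)E_s^*$ via (\ref{A_j^*}), then expand $\langle A_i, A_j^*\rangle$ bilinearly into a sum of terms $\langle E_r, E_s^*\rangle$. Each such term equals $tr(E_r E_s^*) = \sum_{y} (E_r)_{yy}(E_s^*)_{yy}$, and since $(E_s^*)_{yy}$ is the indicator of $\partial(x,y)=s$ while $(E_r)_{yy}$ contributes, one can reduce using $tr(E_r E_0^*) = (E_r)_{xx} = m_r/|X|$; but this route is messier, so I will use the direct trace argument.

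I expect no real obstacle here: the proof is a one-line trace computation once one exploits that $A_j^*$ is diagonal and $A_i$ has zero diagonal for $i\neq 0$. The only point requiring a moment's care is the cosmetic appearance of the factor $k_i$ in the statement, which is reconciled by $k_0=1$. The formal proof would read: By definition $\langle A_i, A_j^*\rangle = tr(A_i^t \overline{A_j^*}) = tr(A_i A_j^*)$ since $A_i, A_j^*$ are real and $A_i^t = A_i$. Since $A_j^*$ is diagonal, $tr(A_i A_j^*) = \sum_{y\in X}(A_i)_{yy}(A_j^*)_{yy} = \delta_{i0}\sum_{y\in X}(A_j^*)_{yy} = \delta_{i0}\,tr(A_j^*)$. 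By Lemma~\ref{trace}(iii), $tr(A_j^*) = \delta_{0j}|X|$. Thus $\langle A_i, A_j^*\rangle = \delta_{i0}\delta_{0j}|X|$, and since $k_0 = 1$ this equals $\delta_{i0}\delta_{0j}|X|k_i$.
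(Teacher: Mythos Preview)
Your proof is correct, but it takes a different and more elementary route than the paper. The paper rewrites $\langle A_i, A_j^*\rangle$ as $\langle A_i A_0^* A_0, A_0 A_j^* A_0\rangle$, expands $A_j^*$ in the dual-idempotent basis via (\ref{A_j^*}), and then invokes Lemma~\ref{<>formula} together with the orthogonality relation (\ref{sum u_i}) to collapse the resulting sum. Your argument bypasses all of that machinery by exploiting directly that $A_j^*$ is diagonal and that $A_i$ has zero diagonal for $i\neq 0$, reducing the computation to Lemma~\ref{trace}(iii). Your approach is shorter and more transparent for this particular inner product; the paper's approach has the virtue of fitting into the same template used later for the more substantial computations in Lemma~\ref{innerproduct}, where the triple-product formula of Lemma~\ref{<>formula} is genuinely needed.
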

\begin{proof}
	Observe that 
	$\langle A_i,A^*_j \rangle =\langle A_iA^*_0A_0,A_0A^*_jA_0 \rangle$.
	By Lemma \ref{<>formula} and (\ref{sum u_i}), (\ref{A_j^*}), the result follows.
\end{proof}

Recall that $A_0=I=A_0^*$.

\begin{lemma} \label{basis M+M*}
	The following is an orthogonal basis for $M+M^*$:
	$$A_D,\dots,A_1,I,A^*_1\dots,A^*_D.$$
\end{lemma}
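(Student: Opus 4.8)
The plan is to verify that the proposed list of $2D+1$ matrices $A_D,\dots,A_1,I,A_1^*,\dots,A_D^*$ is (a) contained in $M+M^*$, (b) mutually orthogonal, and (c) spanning. Containment is immediate: $A_1,\dots,A_D\in M$ and $A_1^*,\dots,A_D^*\in M^*$, while $I=A_0=A_0^*$ lies in both; hence all listed matrices lie in $M+M^*$.

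For orthogonality I would split the pairwise inner products into three types. First, $\langle A_i,A_j\rangle=\delta_{ij}k_i|X|$ for $1\le i,j\le D$ by Lemma \ref{orthogonal}(i), and likewise $\langle A_i,I\rangle=\langle A_i,A_0\rangle=0$ for $i\ge 1$; so the $A_i$'s together with $I$ form an orthogonal set. Second, $\langle A_i^*,A_j^*\rangle=\delta_{ij}m_i|X|$ for $1\le i,j\le D$ by Lemma \ref{orthogonal}(iii), and $\langle A_i^*,I\rangle=0$ for $i\ge 1$; so the $A_i^*$'s together with $I$ form an orthogonal set. Third, the cross terms $\langle A_i,A_j^*\rangle$ for $1\le i,j\le D$ vanish by Lemma \ref{< A_i,A^*_j>}, since that lemma gives $\langle A_i,A_j^*\rangle=\delta_{i0}\delta_{0j}|X|k_i=0$ whenever $i\ge 1$ or $j\ge 1$. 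Combining the three cases, the whole list is an orthogonal set; since the inner product is positive definite and each listed matrix is nonzero, the set is linearly independent, so its size $2D+1$ is a lower bound for $\dim(M+M^*)$.

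It remains to see that these $2D+1$ matrices span $M+M^*$, equivalently that $\dim(M+M^*)=2D+1$. Since $\{A_i\}_{i=0}^D$ is a basis for $M$ and $\{A_i^*\}_{i=0}^D$ is a basis for $M^*$ (Lemma \ref{orthog basis M,M^*}), we have $\dim M=\dim M^*=D+1$, so by inclusion-exclusion $\dim(M+M^*)=2(D+1)-\dim(M\cap M^*)$. Thus it suffices to show $\dim(M\cap M^*)=1$, i.e.\ $M\cap M^*=\mathbb{C}I$. The inclusion $\mathbb{C}I\subseteq M\cap M^*$ is clear. For the reverse, note that $M$ is spanned by matrices supported on the distance-$i$ relations $\{(y,z):\partial(y,z)=i\}$, while every matrix in $M^*$ is diagonal; a matrix in $M\cap M^*$ is therefore a diagonal matrix lying in $M$, so writing it as $\sum_i c_iA_i$ and using that $A_i$ has a nonzero off-diagonal entry for each $1\le i\le D$ (as $D\ge 3$ and $\Gamma$ is connected of diameter $D$, each distance $i\le D$ is attained), forces $c_i=0$ for $i\ge 1$; hence the matrix is a scalar multiple of $I$. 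This gives $\dim(M\cap M^*)=1$ and therefore $\dim(M+M^*)=2D+1$, matching the size of our orthogonal independent set, which is consequently a basis.

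The routine part is the inner-product bookkeeping, which is fully handled by the cited lemmas; the only point requiring a genuine (if short) argument is the identification $M\cap M^*=\mathbb{C}I$, and that is where I would be most careful — in particular making sure to invoke $D\ge 3$ (or at least $D\ge 1$) so that every $A_i$ with $i\ge 1$ genuinely has off-diagonal support, which is what rules out nontrivial diagonal elements of $M$.
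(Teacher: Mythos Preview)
Your proof is correct, but the spanning step takes a detour that the paper avoids. For orthogonality you use exactly the two lemmas the paper cites (Lemmas~\ref{orthogonal} and~\ref{< A_i,A^*_j>}), so that part matches. For spanning, however, you argue indirectly: first establish $M\cap M^*=\mathbb{C}I$ by a support argument, then use inclusion--exclusion to get $\dim(M+M^*)=2D+1$, and finally match this against the size of your orthogonal set. The paper's implicit route is more direct: since $\{A_i\}_{i=0}^D$ spans $M$ and $\{A_i^*\}_{i=0}^D$ spans $M^*$, their union spans $M+M^*$; eliminating the single redundancy $A_0=I=A_0^*$ leaves exactly the listed $2D+1$ matrices, so spanning is immediate without any dimension count. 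In fact the paper runs the logic in the opposite order from yours: it uses the present lemma to obtain $\dim(M+M^*)=2D+1$ (Lemma~\ref{dim(M+M*)}) and only then deduces $M\cap M^*=\mathbb{C}I$ (Lemma~\ref{MintersectM^*}) via inclusion--exclusion. Your self-contained argument for $M\cap M^*=\mathbb{C}I$ is fine (and only needs $D\ge 1$, not $D\ge 3$), but it is doing more work than necessary here.
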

\begin{proof}
	Immediate from Lemmas \ref{orthogonal} and \ref{< A_i,A^*_j>}.
\end{proof}

\begin{lemma} \label{dim(M+M*)}
	$dim(M+M^*)=2D+1$.
\end{lemma}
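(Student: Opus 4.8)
The statement $\dim(M+M^*)=2D+1$ follows directly from the preceding lemma, so the plan is simply to count the size of the orthogonal basis exhibited in Lemma \ref{basis M+M*}.

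First I would recall that by Lemma \ref{basis M+M*} the list $A_D,\dots,A_1,I,A^*_1,\dots,A^*_D$ is an orthogonal basis for $M+M^*$. Since the ambient inner product $\langle\,,\,\rangle$ is positive definite, every nonzero vector has nonzero norm, and none of the listed matrices is zero (each $A_i$ has a $1$ in some entry, and likewise each $A^*_i$), so an orthogonal set of nonzero vectors is automatically linearly independent. Hence the dimension of $M+M^*$ equals the number of matrices in this list.

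Next I would count: there are $D$ matrices $A_1,\dots,A_D$, the single matrix $I=A_0=A_0^*$, and $D$ matrices $A^*_1,\dots,A^*_D$, for a total of $D+1+D=2D+1$. Therefore $\dim(M+M^*)=2D+1$.

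There is essentially no obstacle here: the only subtlety worth a remark is that $A_0$ and $A_0^*$ coincide (both equal $I$), which is why the count is $2D+1$ rather than $2D+2$; this is exactly what Lemma \ref{< A_i,A^*_j>} encodes, since it shows $A_i$ and $A^*_j$ are orthogonal unless $i=j=0$, and in that degenerate case the two matrices are literally the same. A one-line proof citing Lemma \ref{basis M+M*} suffices.

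\begin{proof}
	By Lemma \ref{basis M+M*}, the list $A_D,\dots,A_1,I,A^*_1,\dots,A^*_D$ is an orthogonal basis for $M+M^*$. This list has $2D+1$ elements, so $\dim(M+M^*)=2D+1$.
\end{proof}
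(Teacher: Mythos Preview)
Your proof is correct and follows exactly the paper's approach: the paper's proof is simply ``Immediate from Lemma~\ref{basis M+M*}'', and your argument just spells out the count of $2D+1$ elements in that orthogonal basis. The extra remarks you include (nonzero vectors, positive definiteness, $A_0=A_0^*$) are accurate but not needed, since all of this is already baked into the word ``basis'' in Lemma~\ref{basis M+M*}.
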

\begin{proof}
	Immediate from Lemma \ref{basis M+M*}.
\end{proof}

\begin{lemma} \label{MintersectM^*}
	We have $M\cap M^*=\mathbb{C}I$ and $dim(M\cap M^*)=1$.
\end{lemma}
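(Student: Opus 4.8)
The plan is to show the two inclusions $\mathbb{C}I \subseteq M\cap M^*$ and $M\cap M^* \subseteq \mathbb{C}I$. The first is immediate since $I = A_0 \in M$ and $I = A_0^* \in M^*$. For the reverse inclusion, the natural tool is the orthogonality established in Lemma \ref{< A_i,A^*_j>}: the only way an element of $M$ and an element of $M^*$ can fail to be orthogonal is through their $A_0 = I$ components. So I would take an arbitrary $B \in M\cap M^*$, expand it in the basis $\{A_i\}_{i=0}^D$ of $M$ as $B = \sum_{i=0}^D \alpha_i A_i$ and simultaneously in the basis $\{A_j^*\}_{j=0}^D$ of $M^*$ as $B = \sum_{j=0}^D \beta_j A_j^*$.

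Next I would compute $\langle B, A_\ell^* \rangle$ for a fixed $\ell$ with $1 \le \ell \le D$ in two ways. Using the $M$-expansion, $\langle B, A_\ell^* \rangle = \sum_{i=0}^D \alpha_i \langle A_i, A_\ell^* \rangle = 0$ by Lemma \ref{< A_i,A^*_j>}, since $\ell \neq 0$ forces every term to vanish. Using the $M^*$-expansion, $\langle B, A_\ell^* \rangle = \sum_{j=0}^D \beta_j \langle A_j^*, A_\ell^* \rangle = \beta_\ell m_\ell |X|$ by Lemma \ref{orthogonal}(iii). Since $m_\ell |X| \neq 0$, we conclude $\beta_\ell = 0$ for all $\ell \ge 1$, hence $B = \beta_0 A_0^* = \beta_0 I \in \mathbb{C}I$. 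This proves $M \cap M^* \subseteq \mathbb{C}I$, and combined with the trivial inclusion gives $M\cap M^* = \mathbb{C}I$; the dimension statement is then immediate since $I \neq 0$.

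I do not anticipate a serious obstacle here — the argument is a clean application of the orthogonality lemmas already in hand. The only point requiring a moment's care is making sure one uses the correct inner product formula for the cross terms $\langle A_i, A_\ell^* \rangle$ (Lemma \ref{< A_i,A^*_j>}) versus the same-type terms $\langle A_j^*, A_\ell^* \rangle$ (Lemma \ref{orthogonal}(iii)), and observing that $m_\ell |X|$ is nonzero so that division is legitimate. An alternative, essentially equivalent, route would be to note that $M + M^*$ has dimension $2D+1$ by Lemma \ref{dim(M+M*)}, while $\dim M = \dim M^* = D+1$, so by the dimension formula $\dim(M\cap M^*) = (D+1) + (D+1) - (2D+1) = 1$; since $\mathbb{C}I$ is a $1$-dimensional subspace of $M\cap M^*$, equality follows. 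I would likely present the first, more self-contained argument, but mention this dimension-count shortcut as a remark.
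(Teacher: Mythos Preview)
Your proposal is correct. The paper's own proof is precisely your ``alternative route'': it notes $I \in M\cap M^*$, then applies the dimension formula $\dim(M\cap M^*) = \dim M + \dim M^* - \dim(M+M^*) = (D+1)+(D+1)-(2D+1) = 1$ using Lemma \ref{dim(M+M*)}, and concludes. Your primary argument---expanding an arbitrary $B\in M\cap M^*$ in both bases and using Lemma \ref{< A_i,A^*_j>} to force $\beta_\ell = 0$ for $\ell\ge 1$---is a perfectly valid direct computation that bypasses the intermediate Lemmas \ref{basis M+M*} and \ref{dim(M+M*)}. Both approaches ultimately rest on the same orthogonality relation $\langle A_i, A_j^*\rangle = \delta_{i0}\delta_{0j}|X|$, so the difference is organizational: the paper packages that relation into a basis/dimension statement for $M+M^*$ first and then invokes the inclusion--exclusion formula, whereas your main argument applies the orthogonality directly to the element $B$. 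Given that Lemma \ref{dim(M+M*)} is already established at this point in the paper, the dimension-count is marginally quicker, but your direct argument has the virtue of being self-contained.
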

\begin{proof} 
	Observe that $I\in M\cap M^*$.
	By linear algebra, we have $dim(M\cap M^*)=dim(M)+dim(M^*)-dim(M+M^*)$.
	By construction $dim(M)=D+1$, $dim(M^*)=D+1$.
	By this and Lemma \ref{dim(M+M*)}, $dim(M\cap M^*)=1$.
	The result follows.
\end{proof}

%\begin{lemma} \label{dim(M intersects M^*)}
%	$dim(M\cap M^*)=1$.
%\end{lemma}
%\begin{proof}
%	Immediate from Lemma \ref{dim(M intersects M^*)}.
%\end{proof}

\begin{lemma} \label{orthog coml}
	The following $(i)$--$(iv)$ hold:
	\begin{enumerate}[(i)]
		\item	The matrices $\{A_i\}_{i=1}^D$ form an orthogonal basis for the orthogonal complement of $M\cap M^*$ in $M$.
		\item  	The matrices $\{A^*_i\}_{i=1}^D$ form an orthogonal basis for the orthogonal complement of $M\cap M^*$ in $M^*$.
		\item  	The matrices $\{A_i\}_{i=1}^D$ form an orthogonal basis for the orthogonal complement of $M^*$ in $M+M^*$.
		\item  	The matrices $\{A^*_i\}_{i=1}^D$ form an orthogonal basis for the orthogonal complement of $M$ in $M+M^*$.
	\end{enumerate}
\end{lemma}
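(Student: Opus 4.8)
The plan is to deduce all four statements directly from the orthogonality facts already established, without any new computation. The key observations are: $M$ has orthogonal basis $\{A_i\}_{i=0}^D$ (Lemma \ref{orthog basis M,M^*}) with $A_0=I$; $M^*$ has orthogonal basis $\{A_i^*\}_{i=0}^D$ with $A_0^*=I$; by Lemma \ref{< A_i,A^*_j>} each $A_i$ with $i\geq 1$ is orthogonal to each $A_j^*$ with $j\geq 1$ and to $A_0^*=I$; and by Lemma \ref{MintersectM^*} we have $M\cap M^* = \mathbb{C}I$.

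First I would prove $(i)$: since $\{I,A_1,\dots,A_D\}$ is an orthogonal basis for $M$ and $M\cap M^* = \mathbb{C}I$, the vectors $A_1,\dots,A_D$ are all orthogonal to $I$ (Lemma \ref{orthogonal}(i)), span a $D$-dimensional subspace of $M$, and each lies in $(M\cap M^*)^\perp$ inside $M$; comparing dimensions ($\dim M - \dim(M\cap M^*) = (D+1)-1 = D$) shows they form an orthogonal basis for that orthogonal complement. Statement $(ii)$ is the mirror image, using the orthogonal basis $\{I,A_1^*,\dots,A_D^*\}$ for $M^*$ and Lemma \ref{orthogonal}(iii).

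Next I would prove $(iii)$ and $(iv)$ from Lemma \ref{basis M+M*}, which gives the orthogonal basis $A_D,\dots,A_1,I,A_1^*,\dots,A_D^*$ for $M+M^*$. For $(iii)$, the sub-basis $\{I,A_1^*,\dots,A_D^*\}$ is an orthogonal basis for $M^*$, so the complementary vectors $A_1,\dots,A_D$ — being orthogonal to all of these by Lemmas \ref{orthogonal}(i) and \ref{< A_i,A^*_j>} — form an orthogonal basis for the orthogonal complement of $M^*$ in $M+M^*$; the dimension count $\dim(M+M^*) - \dim M^* = (2D+1)-(D+1) = D$ confirms there is nothing left over. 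Statement $(iv)$ is again the mirror image, with the roles of $A_i$ and $A_i^*$ interchanged.

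I do not anticipate a genuine obstacle here: the whole lemma is a bookkeeping consequence of the preceding orthogonality lemmas and the dimension formulas. The only point requiring a little care is making sure that in each case the listed vectors actually lie in the ambient space $W$ (e.g.\ that $A_1,\dots,A_D \in M+M^*$, which is clear since $A_i\in M$) and that they are orthogonal to \emph{every} vector of the claimed subspace $U$, which is why one needs both Lemma \ref{orthogonal} (orthogonality among the $A_i$ or among the $A_i^*$) and Lemma \ref{< A_i,A^*_j>} (cross-orthogonality between $A_i$, $i\geq 1$, and $A_j^*$, $j\geq 0$); the dimension equalities then upgrade ``orthogonal and independent'' to ``orthogonal basis.''
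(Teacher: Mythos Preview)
Your proposal is correct and follows essentially the same approach as the paper, which simply says the result ``follows from definitions of $M,M^*$ along with Lemmas \ref{basis M+M*} and \ref{MintersectM^*}.'' You have spelled out in detail precisely the dimension counts and orthogonality checks that the paper leaves implicit.
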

\begin{proof}
	Follows from definitions of $M,M^*$ along with Lemmas \ref{basis M+M*} and \ref{MintersectM^*}.
\end{proof}

\begin{lemma} \label{dim orthog coml}
	Each of the following subspaces has dimension $D$:
	\begin{align*}
	&(M\cap M^*)^\perp \cap M, \qquad \qquad &(M\cap M^*)^\perp \cap M^*,\\
	&(M^*)^\perp \cap (M+M^*),  &M^\perp \cap (M+M^*).
	\end{align*}
\end{lemma}
\begin{proof}
	Immediate from Lemma \ref{orthog coml}.
\end{proof}

%%%%%%%%%%%%%%%%%%%%%%%%%%%%%%%%%%%%%%%%%%%%%%%%%%%%%%%%%%%%%%%%%%%%%%%%%%
%%%%%%%%%%%%%%%%%%%%%%%%%%%%%%%%%%%%%%%%%%%%%%%%%%%%%%%%%%%%%%%%%%%%%%%%%%
%%%%%%%%%%%%% 												 %%%%%%%%%%%%%
%%%%%%%%%%%%%		The subspace $MM^*+M^*M$	   	         %%%%%%%%%%%%%	
%%%%%%%%%%%%% 										         %%%%%%%%%%%%%   
%%%%%%%%%%%%%%%%%%%%%%%%%%%%%%%%%%%%%%%%%%%%%%%%%%%%%%%%%%%%%%%%%%%%%%%%%%
%%%%%%%%%%%%%%%%%%%%%%%%%%%%%%%%%%%%%%%%%%%%%%%%%%%%%%%%%%%%%%%%%%%%%%%%%%

\section{The subspace $MM^*+M^*M$}

Our goal in this section is to analyze the inclusion diagram from $M+M^*$ up to $MM^*+M^*M$.

%In this section we describe the intersection and the sum of $MM^*$ and $M^*M$.
%Next goal is to describe the intersection and the sum of $MM^*$ and $M^*M$.

\begin{lemma} \label{innerproduct}
	For $0\leq i,j,r,s \leq D$,
	\begin{enumerate}[(i)]
		\item 	$\langle A_iA^*_j,A^*_rA_s \rangle =\delta_{is}\delta_{jr}|X|k_im_ju_i(\theta_j)$,
		\item  $\langle A_iA^*_j,A_rA^*_s \rangle =\delta_{ir}\delta_{js}|X|k_im_j$.
	\end{enumerate}
\end{lemma}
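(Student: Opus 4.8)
The plan is to reduce both inner products to the orthogonality relation Lemma~\ref{formula}(i). The key observation is that each of $A_iA^*_j$, $A^*_rA_s$ and $A_rA^*_s$ can be written as a linear combination of matrices of the form $E^*_aA_bE^*_c$, after which Lemma~\ref{formula}(i) evaluates every pairwise inner product.

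First I would use (\ref{A_j^*}) to write $A^*_j=m_j\sum_b u_b(\theta_j)E^*_b$, and insert $I=\sum_a E^*_a$ to obtain $A_iA^*_j=m_j\sum_{a,b}u_b(\theta_j)E^*_aA_iE^*_b$; similarly $A^*_rA_s=m_r\sum_{c,d}u_c(\theta_r)E^*_cA_sE^*_d$ and $A_rA^*_s=m_s\sum_{c,d}u_d(\theta_s)E^*_cA_rE^*_d$. Substituting these expansions into $\langle\,,\,\rangle$, using that the inner product is linear in the first variable and conjugate-linear in the second while the scalars $u_i(\theta_j)$ are real, and applying Lemma~\ref{formula}(i) to each term, the Kronecker deltas collapse most of the quadruple sums. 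For (i) this leaves $\langle A_iA^*_j,A^*_rA_s\rangle=m_jm_r\,\delta_{is}\sum_{a,b}u_b(\theta_j)u_a(\theta_r)k_bp^b_{ai}$, and for (ii) it leaves $\langle A_iA^*_j,A_rA^*_s\rangle=m_jm_s\,\delta_{ir}\sum_{a,b}u_b(\theta_j)u_b(\theta_s)k_bp^b_{ai}$.

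It then remains to evaluate the two sums over the intersection numbers. For (ii) this is quick: summing $A_aA_i=\sum_b p^b_{ai}A_b$ (see (\ref{A_iA_j})) over $a$ and using $\sum_a A_a=J$ together with $JA_i=k_iJ$ gives $\sum_a p^b_{ai}=k_i$, and then (\ref{sum u_i}) gives $\sum_b u_b(\theta_j)u_b(\theta_s)k_b=\delta_{js}m_j^{-1}|X|$; collecting the constants and noting $m_s=m_j$ when $j=s$ yields the asserted value. For (i) I would first record the auxiliary identity $\sum_a u_a(\theta_r)p^b_{ai}=k_iu_i(\theta_r)u_b(\theta_r)$, which follows by right-multiplying $\sum_a u_a(\theta_r)A_a=|X|m_r^{-1}E_r$ (from (\ref{E_j})) by $A_i$, using $E_rA_i=k_iu_i(\theta_r)E_r$, re-expanding $A_aA_i$ via (\ref{A_iA_j}), and comparing coefficients of $A_b$ (again via (\ref{E_j})). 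Feeding this in turns the remaining sum into $k_iu_i(\theta_r)\sum_b u_b(\theta_j)u_b(\theta_r)k_b=k_iu_i(\theta_r)\,\delta_{jr}m_j^{-1}|X|$ by (\ref{sum u_i}); collecting constants and using $j=r$ to rewrite $u_i(\theta_r)=u_i(\theta_j)$ and $m_r=m_j$ gives the claim.

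I expect the main obstacle to be purely organizational: keeping careful track of which $u$-index survives each Kronecker delta (this is precisely what makes part (i) harder than part (ii)) and verifying the auxiliary intersection-number identity. No idea beyond the Preliminaries is needed; as an alternative one could instead expand each product into matrices $A_aE^*_bA_c$ and use Lemma~\ref{<>formula} in place of Lemma~\ref{formula}(i), which leads to the same auxiliary sums, and I would pick whichever version keeps the index chase shortest.
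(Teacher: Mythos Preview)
Your argument is correct. The expansions via $A^*_j=m_j\sum_b u_b(\theta_j)E^*_b$ and $I=\sum_a E^*_a$ are valid, Lemma~\ref{formula}(i) collapses the sums exactly as you describe, and both auxiliary identities ($\sum_a p^b_{ai}=k_i$ and $\sum_a u_a(\theta_r)p^b_{ai}=k_iu_i(\theta_r)u_b(\theta_r)$) are derived cleanly from (\ref{A_iA_j}), (\ref{E_j}) and $E_rA_i=k_iu_i(\theta_r)E_r$. The final use of (\ref{sum u_i}) is correct in both parts.

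Your route, however, is not the paper's. The paper treats the two parts by different devices. For (i) it avoids any expansion into $E^*_aA_iE^*_b$: it writes $\langle A_iA^*_j,A^*_rA_s\rangle=\mathrm{tr}(A^*_jA_iA^*_rA_s)$, evaluates this trace entrywise using $(A^*_j)_{yy}=|X|(E_j)_{xy}$, and recognises the result as $|X|\langle E_jE_r,\,A_i\circ A_s\rangle$; since $E_jE_r=\delta_{jr}E_j$ and $A_i\circ A_s=\delta_{is}A_i$, the problem reduces at once to $\langle E_j,A_i\rangle$, bypassing your intersection-number identity entirely. For (ii) the paper writes $A_iA^*_j=A_iA^*_jA_0$ and applies Lemma~\ref{<>formula} rather than Lemma~\ref{formula}(i); the resulting triple sum collapses via (\ref{p^h_{0j}}), (\ref{p^0_{ij}}) and (\ref{sum u_i}) with essentially no side computation. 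What your uniform approach buys is that a single lemma handles both parts; what it costs is the extra auxiliary identity for (i). The paper's approach for (i) is shorter and more conceptual (it exposes the Schur-product structure), while its approach for (ii) is close in spirit to the alternative you mention at the end.
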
 
\begin{proof}
	$(i)$
	By (\ref{E_j}), (\ref{A_i^*}) and Lemma \ref{orthogonal}, we obtain
	\begin{align*} 
	\langle A_iA^*_j,A^*_rA_s \rangle 
	& = tr((A_iA_j^*)^t(\overline{A_r^*A_s})) \\
	& = tr(A_j^*A_iA_r^*A_s) 	\\
	& = \displaystyle\sum_{y\in X}\displaystyle\sum_{z\in X} (A_j^*)_{yy}(A_i)_{yz}(A_r^*)_{zz}(A_s)_{zy} \\
	& = |X|^2\displaystyle\sum_{y\in X}\displaystyle\sum_{z\in X} (E_j)_{xy}(A_i)_{yz}(E_r)_{xz}(A_s)_{zy} \\
	& = |X|\displaystyle\sum_{y\in X}\displaystyle\sum_{z\in X}\displaystyle\sum_{x\in X} (E_j)_{xy}(A_i)_{yz}(E_r)_{xz}(A_s)_{zy} \\
	& = |X|tr(E_jE_r(A_i \circ A_s))\\
	& = |X|tr((E_jE_r)^t(\overline{A_i \circ A_s}))\\
	& = |X|\langle E_jE_r,A_i \circ A_s \rangle\\
	& = \delta_{is}\delta_{jr}|X|\langle E_j,A_i\rangle\\
	& = \delta_{is}\delta_{jr}m_j\displaystyle\sum_{h=0}^{D}u_h(\theta_j)\langle A_h,A_i\rangle\\
	& = \delta_{is}\delta_{jr}m_j\displaystyle\sum_{h=0}^{D}u_h(\theta_j)\delta_{hi}k_i|X|\\
	& = \delta_{is}\delta_{jr}|X|k_im_ju_i(\theta_j). 
	\end{align*}
	
	$(ii)$
	By Lemma \ref{<>formula} and  (\ref{p^h_{0j}}), (\ref{p^0_{ij}}), (\ref{sum u_i}), (\ref{A_j^*}) we obtain
	\begin{align*} 
	\langle A_iA^*_j,A_rA^*_s \rangle 
	& = \langle A_iA^*_jA_0,A_rA^*_sA_0 \rangle \\
	& = m_jm_s\displaystyle\sum_{h=0}^{D}u_h(\theta_j)\displaystyle\sum_{\ell=0}^{D}u_\ell(\theta_s)\langle A_iE^*_hA_0,A_rE^*_\ell A_0 \rangle \\
	& = m_jm_s\displaystyle\sum_{h=0}^{D}u_h(\theta_j)\displaystyle\sum_{\ell=0}^{D}u_\ell(\theta_s)\displaystyle\sum_{t=0}^{D}k_tp^t_{ir}p^t_{h\ell}p^t_{00} \\
	& = m_jm_s\displaystyle\sum_{h=0}^{D}u_h(\theta_j)\displaystyle\sum_{\ell=0}^{D}u_\ell(\theta_s)k_0p^0_{ir}p^0_{h\ell} \\
	& = \delta_{ir}k_im_jm_s\displaystyle\sum_{h=0}^{D}u_h(\theta_j)u_h(\theta_s)k_h\\
	& = \delta_{ir}k_im_jm_s\delta_{js}m^{-1}_j|X|\\
	& = \delta_{ir}\delta_{js}|X|k_im_j. \qedhere
	\end{align*}
\end{proof}

%Now we obtain an orthogonal basis for each space in the diagram.
%We start at the bottom of diagram.

\begin{lemma} \label{orthog basis MM^*,M^*M}
	The following $(i),(ii)$ hold:
	\begin{enumerate}[(i)]
	\item  The matrices $\{A_iA^*_j|0\leq i,j \leq D\}$ form an orthogonal basis for $MM^*$.
	\item  The matrices $\{A^*_jA_i|0\leq i,j \leq D\}$ form an orthogonal basis for $M^*M$.
	\end{enumerate}
\end{lemma}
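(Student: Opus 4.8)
The plan is to show that the spanning set $\{A_iA_j^*\mid 0\le i,j\le D\}$ of $MM^*$ is orthogonal; orthogonality will immediately give linear independence (since none of these matrices is zero — indeed $\langle A_iA_j^*,A_iA_j^*\rangle=|X|k_im_j>0$), so the set is a basis. First I would invoke Lemma~\ref{innerproduct}(ii), which states
\[
\langle A_iA_j^*,A_rA_s^*\rangle=\delta_{ir}\delta_{js}|X|k_im_j\qquad(0\le i,j,r,s\le D).
\]
This says precisely that distinct pairs $(i,j)\ne(r,s)$ give orthogonal matrices, and that $\|A_iA_j^*\|^2=|X|k_im_j\ne0$. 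Hence the $(D+1)^2$ matrices $A_iA_j^*$ are pairwise orthogonal nonzero vectors, so they are linearly independent and form an orthogonal basis for their span, which is by definition $MM^*=\mathrm{Span}\{RS\mid R\in M,\ S\in M^*\}$ — here one uses that $\{A_i\}_{i=0}^D$ spans $M$ and $\{A_j^*\}_{j=0}^D$ spans $M^*$ (Lemma~\ref{orthog basis M,M^*}), so products of basis elements span the product space. This proves (i).

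For part (ii), the cleanest route is to transpose. Since each $A_i$ and each $A_j^*$ is symmetric (real symmetric, in fact), we have $(A_iA_j^*)^t=A_j^*A_i$. The map $B\mapsto B^t$ is a linear bijection of $\mathrm{Mat}_X(\mathbb{C})$; moreover, because all the matrices involved are real, $\langle B^t,C^t\rangle=\mathrm{tr}((B^t)^t\overline{C^t})=\mathrm{tr}(B\,\overline{C}^t)=\mathrm{tr}((\overline C\,B^t)^{})$... more directly, for real $B,C$ one checks $\langle B^t,C^t\rangle=\mathrm{tr}(B\overline{C})=\mathrm{tr}(\overline{C}B)=\mathrm{tr}((C^t\overline{B})^{t\,})=\langle B,C\rangle$ after using symmetry of $B,C$ — so transposition is an isometry on the real subspace spanned by these matrices. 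Applying $t$ to the orthogonal basis $\{A_iA_j^*\}$ of $MM^*$ therefore yields the orthogonal set $\{A_j^*A_i\}$, which spans $(MM^*)^t=M^*M$, proving (ii). Alternatively one could simply quote Lemma~\ref{innerproduct}(ii) again applied to $\langle A_j^*A_i,A_s^*A_r\rangle$ via the same transpose identity, but the symmetry argument is shorter.

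The only genuine subtlety — not really an obstacle, but the point that needs a sentence of care — is the passage from "the $(D+1)^2$ products of basis vectors are orthonormal after scaling" to "they form a basis of $MM^*$": one must note that $MM^*$ is by definition the span of \emph{all} products $RS$ with $R\in M$, $S\in M^*$, and that since $M=\mathrm{Span}\{A_i\}$ and $M^*=\mathrm{Span}\{A_j^*\}$, bilinearity of matrix multiplication forces $MM^*=\mathrm{Span}\{A_iA_j^*\}$; no larger. After that, orthogonality of a set of nonzero vectors gives linear independence for free, so nothing further is needed. I expect the whole proof to be three or four lines, citing Lemma~\ref{innerproduct} and Lemma~\ref{orthog basis M,M^*}, plus the transpose remark for part (ii).
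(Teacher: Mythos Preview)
Your proposal is correct and follows essentially the same route as the paper, which simply says ``Immediate from Lemma~\ref{innerproduct}'': you invoke Lemma~\ref{innerproduct}(ii) for part~(i) and then pass to part~(ii) by the transpose isometry (your intermediate trace manipulation is a bit garbled, but the conclusion $\langle B^t,C^t\rangle=\langle B,C\rangle$ for real $B,C$ is correct and does not require symmetry of $B,C$). The spanning and nonvanishing remarks you add are exactly the implicit content of the paper's one-line proof.
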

\begin{proof}
	Immediate from Lemma \ref{innerproduct}.
\end{proof}

\begin{lemma} \label{dim orthog basis MM^*,M^*M}
	Each of the following subspaces has dimension $(D+1)^2$:
	\begin{align*}
	MM^*,  \qquad \qquad \qquad M^*M.
	\end{align*}
\end{lemma}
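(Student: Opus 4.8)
The plan is to deduce this immediately from the orthogonal basis already established in Lemma~\ref{orthog basis MM^*,M^*M}. Since $\{A_iA^*_j \mid 0\leq i,j\leq D\}$ is an orthogonal basis for $MM^*$, and an orthogonal set of nonzero vectors is linearly independent, the dimension of $MM^*$ equals the number of such basis vectors, namely $(D+1)^2$. The same argument applies verbatim to $M^*M$ using the basis $\{A^*_jA_i \mid 0\leq i,j\leq D\}$.

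The one point that needs a word of care is that these spanning sets genuinely have $(D+1)^2$ \emph{distinct} nonzero members, i.e.\ that no $A_iA^*_j$ vanishes and that the indexing $(i,j)\mapsto A_iA^*_j$ is injective. Both follow from Lemma~\ref{innerproduct}(ii): taking $r=i$ and $s=j$ gives $\langle A_iA^*_j, A_iA^*_j\rangle = |X|k_im_j$, which is nonzero since $k_i\geq 1$ and $m_j\geq 1$; hence each $A_iA^*_j\neq 0$. Injectivity and orthogonality together are exactly the content of Lemma~\ref{innerproduct}(ii): for $(i,j)\neq(r,s)$ we have $\langle A_iA^*_j,A_rA^*_s\rangle = 0$, so distinct index pairs give orthogonal (in particular distinct) matrices. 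Thus the $(D+1)^2$ matrices $A_iA^*_j$ are pairwise orthogonal and nonzero, so they are linearly independent, and being a spanning set (by definition of $MM^*$, since $\{A_i\}$ spans $M$ and $\{A^*_j\}$ spans $M^*$) they form a basis of size $(D+1)^2$.

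There is no real obstacle here; this is a bookkeeping consequence of the previous lemma. The proof I would write is simply:

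\begin{proof}
	Immediate from Lemma \ref{orthog basis MM^*,M^*M}.
\end{proof}
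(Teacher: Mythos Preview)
Your proof is correct and matches the paper's own argument verbatim: the paper also writes simply ``Immediate from Lemma~\ref{orthog basis MM^*,M^*M}.'' Your added justification that each $A_iA_j^*$ is nonzero (since $k_i,m_j\geq 1$) and that distinct index pairs give orthogonal matrices is a helpful unpacking of what ``immediate'' means here, but it is not required beyond what the paper provides.
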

\begin{proof}
	Immediate from Lemma \ref{orthog basis MM^*,M^*M}. 
\end{proof}

\begin{lemma} \label{MM^*+M^*M}
	We have
	\begin{center}
		$MM^*+M^*M=\displaystyle\sum_{i=0}^{D}\displaystyle\sum_{j=0}^{D}Span(A_iA^*_j,A^*_jA_i)$ \qquad (orthogonal direct sum).
	\end{center}
\end{lemma}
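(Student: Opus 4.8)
The plan is to establish the claimed orthogonal direct sum decomposition by combining the four inner product formulas already available: Lemma \ref{innerproduct}(i), which governs $\langle A_iA^*_j, A^*_rA_s\rangle$; Lemma \ref{innerproduct}(ii), which governs $\langle A_iA^*_j, A_rA^*_s\rangle$; the analogue of Lemma \ref{innerproduct}(ii) for $\langle A^*_jA_i, A^*_sA_r\rangle$ (obtained by the same argument, or by transpose symmetry, since $(A^*_jA_i)^t = A_iA^*_j$ and the inner product is invariant under simultaneous transpose); and Lemma \ref{orthog basis MM^*,M^*M}, which identifies orthogonal bases for $MM^*$ and $M^*M$ individually. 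First I would observe that, as a vector space, $MM^*+M^*M$ is spanned by the union $\{A_iA^*_j\} \cup \{A^*_jA_i\}$ over $0\le i,j\le D$, so it certainly equals $\sum_{i,j}\mathrm{Span}(A_iA^*_j, A^*_jA_i)$; the content of the statement is that this sum is \emph{orthogonal} across distinct index pairs $(i,j)$.

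The key step is to check orthogonality of the two-dimensional (or lower-dimensional) blocks $\mathrm{Span}(A_iA^*_j, A^*_jA_i)$ and $\mathrm{Span}(A_rA^*_s, A^*_sA_r)$ whenever $(i,j)\neq(r,s)$. There are four types of pairings to verify: $\langle A_iA^*_j, A_rA^*_s\rangle$, $\langle A^*_jA_i, A^*_sA_r\rangle$, $\langle A_iA^*_j, A^*_sA_r\rangle$, and $\langle A^*_jA_i, A_rA^*_s\rangle$. By Lemma \ref{innerproduct}(ii) the first vanishes unless $i=r$ and $j=s$; by the transpose analogue the second vanishes under the same condition; and by Lemma \ref{innerproduct}(i) the third vanishes unless $i=r$ and $j=s$ as well (note the cross term $\langle A_iA^*_j, A^*_sA_r\rangle$ has the roles arranged so that the Kronecker deltas still force $i=r$, $j=s$), and likewise the fourth. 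Hence for $(i,j)\neq(r,s)$ all four pairings are zero, so the blocks are mutually orthogonal, and the sum is an orthogonal direct sum. I would also remark that within a single block, when the two spanning matrices $A_iA^*_j$ and $A^*_jA_i$ happen to be equal or linearly dependent (e.g.\ when $i=0$ or $j=0$, where $A_0=A^*_0=I$), the block simply collapses to a one-dimensional space, which does not affect the orthogonality of the decomposition.

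I do not expect a serious obstacle here: the result is essentially bookkeeping on top of Lemma \ref{innerproduct}. The one point that needs a line of care is justifying the transpose analogue of Lemma \ref{innerproduct}(ii) for $\langle A^*_jA_i, A^*_sA_r\rangle$ — but this follows immediately from the identity $\langle R,S\rangle = \langle R^t, S^t\rangle$ for real matrices (which holds since all the $A_i$, $A^*_j$ are real and symmetric, so $(A^*_jA_i)^t = A_iA^*_j$), reducing it to Lemma \ref{innerproduct}(ii) itself. The only other thing to be explicit about is that the sum on the right-hand side genuinely exhausts $MM^*+M^*M$, which is clear from Lemma \ref{orthog basis MM^*,M^*M} since $\{A_iA^*_j\}$ spans $MM^*$ and $\{A^*_jA_i\}$ spans $M^*M$.
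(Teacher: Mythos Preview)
Your proposal is correct and follows essentially the same approach as the paper, which simply cites Lemma~\ref{innerproduct}; you have merely spelled out the four block-orthogonality checks that the paper leaves implicit. The transpose reduction $\langle A^*_jA_i, A^*_sA_r\rangle = \langle A_iA^*_j, A_rA^*_s\rangle$ is exactly the kind of step the paper expects the reader to supply, so nothing new is needed.
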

\begin{proof}
	Immediate from Lemma \ref{innerproduct}.
\end{proof}

\begin{corollary} \label{dim(MM^*+M^*M)}
	We have 
	$$dim(MM^*+M^*M)=\displaystyle\sum_{i=0}^{D}\displaystyle\sum_{j=0}^{D}dim(Span(A_iA^*_j,A^*_jA_i)).$$
\end{corollary}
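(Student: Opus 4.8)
The plan is to read off the claim directly from Lemma \ref{MM^*+M^*M}. That lemma expresses $MM^*+M^*M$ as an orthogonal direct sum of the subspaces $Span(A_iA^*_j,A^*_jA_i)$ over $0\le i,j\le D$. Since the sum is direct, the dimension of the whole is the sum of the dimensions of the summands; the orthogonality is what makes the sum direct (distinct summands meet only in $0$, and any partial sum meets the next summand only in $0$), but once directness is known, additivity of dimension is just linear algebra. So I would simply take dimensions on both sides of the identity in Lemma \ref{MM^*+M^*M}.

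Concretely, first I would recall that for a finite family of subspaces $\{U_k\}$ of ${\rm Mat}_X(\mathbb{C})$ whose sum is direct, one has $\dim\bigl(\bigoplus_k U_k\bigr)=\sum_k \dim U_k$: choose a basis of each $U_k$ and observe their union is a basis of the sum. Then I would apply this with the index set $\{(i,j):0\le i,j\le D\}$ and $U_{(i,j)}=Span(A_iA^*_j,A^*_jA_i)$, invoking Lemma \ref{MM^*+M^*M} for the directness. This yields exactly
$$dim(MM^*+M^*M)=\sum_{i=0}^{D}\sum_{j=0}^{D}dim(Span(A_iA^*_j,A^*_jA_i)).$$

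There is essentially no obstacle here; the work has already been done in Lemma \ref{innerproduct} (which feeds Lemma \ref{MM^*+M^*M}). The one point a careful reader might flag is that each summand $Span(A_iA^*_j,A^*_jA_i)$ has dimension $1$ or $2$ according to whether $A_iA^*_j$ and $A^*_jA_i$ are proportional, so the right-hand side is not yet an explicit number; but determining exactly when these dimensions drop to $1$ is a separate matter handled in the subsequent sections, and is not needed for the present corollary.
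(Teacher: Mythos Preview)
Your proposal is correct and matches the paper's own proof: both simply take dimensions on both sides of the orthogonal direct sum in Lemma~\ref{MM^*+M^*M}. Your additional remarks about directness and the possible dimension $1$ or $2$ of each summand are accurate but go slightly beyond what is needed here.
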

\begin{proof}
	Immediate from Lemma \ref{MM^*+M^*M}.
\end{proof}

\begin{definition} \label{H_ij}
	For $0\leq i,j \leq D$ let $H_{i,j}$ denote the $2\times2$ matrix of inner products for $A_iA^*_j, A^*_jA_i$.
\end{definition}

\begin{lemma} \label{matrix H}
	For $0\leq i,j \leq D$,
	\[
	H_{i,j}=|X|k_im_j\begin{pmatrix}
	  1  			& u_i(\theta_j)     	\\[1em]
	u_i(\theta_j)  	&  1     
	\end{pmatrix}.
	\]
	 
\end{lemma}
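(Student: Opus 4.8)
The plan is to directly compute the four entries of the matrix $H_{i,j}$ using Lemma \ref{innerproduct}. Recall from Definition \ref{H_ij} that $H_{i,j}$ is the Gram matrix of the ordered pair $(A_iA^*_j,\ A^*_jA_i)$, so its entries are
$(H_{i,j})_{11}=\langle A_iA^*_j,A_iA^*_j\rangle$,
$(H_{i,j})_{12}=\langle A_iA^*_j,A^*_jA_i\rangle$,
$(H_{i,j})_{21}=\langle A^*_jA_i,A_iA^*_j\rangle$, and
$(H_{i,j})_{22}=\langle A^*_jA_i,A^*_jA_i\rangle$.

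First I would handle the diagonal entries. For $(H_{i,j})_{11}$, apply Lemma \ref{innerproduct}(ii) with $r=i$ and $s=j$: the Kronecker deltas $\delta_{ir}\delta_{js}$ both evaluate to $1$, giving $(H_{i,j})_{11}=|X|k_im_j$. For $(H_{i,j})_{22}$, note that $\langle A^*_jA_i,A^*_jA_i\rangle=\overline{\langle A_iA^*_j,A_iA^*_j\rangle}$ or, more simply, observe that since all the matrices $A_i,A^*_j$ are real and symmetric we have $\langle R,R\rangle=\langle R^t,R^t\rangle$, and $(A_iA^*_j)^t=A^*_jA_i$; hence $(H_{i,j})_{22}=(H_{i,j})_{11}=|X|k_im_j$. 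Alternatively one can quote Lemma \ref{innerproduct}(i) with the roles arranged so that the first slot is $A^*_jA_i$ — but the symmetry argument is cleanest.

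Next I would handle the off-diagonal entries. For $(H_{i,j})_{12}=\langle A_iA^*_j,A^*_jA_i\rangle$, apply Lemma \ref{innerproduct}(i) with $r=j$ and $s=i$: the deltas $\delta_{is}\delta_{jr}$ both evaluate to $1$, yielding $(H_{i,j})_{12}=|X|k_im_ju_i(\theta_j)$. Since the Hermitean inner product satisfies $\langle R,S\rangle=\overline{\langle S,R\rangle}$ and all quantities here are real (the $A_i,A^*_j$ have real entries, and $k_i,m_j,u_i(\theta_j)$ are real), we get $(H_{i,j})_{21}=(H_{i,j})_{12}=|X|k_im_ju_i(\theta_j)$. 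Assembling the four entries and factoring out $|X|k_im_j$ gives exactly the displayed formula.

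I do not anticipate a genuine obstacle here: the lemma is a bookkeeping corollary of Lemma \ref{innerproduct}, and the only point requiring a moment's care is matching the index pattern of Lemma \ref{innerproduct}(i) — whose statement is phrased as $\langle A_iA^*_j,A^*_rA_s\rangle$ with answer $\delta_{is}\delta_{jr}|X|k_im_ju_i(\theta_j)$ — to the inner product $\langle A_iA^*_j,A^*_jA_i\rangle$ we actually want, i.e. setting $r=j$, $s=i$ so that the surviving scalar is $u_i(\theta_j)$ (not $u_j(\theta_i)$). I would state this index substitution explicitly in the writeup to avoid any ambiguity, and similarly note the use of realness to pass between $\langle R,S\rangle$ and $\langle S,R\rangle$.
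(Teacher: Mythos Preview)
Your proposal is correct and follows exactly the paper's approach: the paper's proof simply says ``Immediate from Lemma \ref{innerproduct} and Definition \ref{H_ij},'' and what you have written is a careful unpacking of precisely that. Your remarks about matching indices in Lemma \ref{innerproduct}(i) and using realness/symmetry for the $(2,1)$ and $(2,2)$ entries are accurate and appropriate.
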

\begin{proof}
	Immediate from Lemma \ref{innerproduct} and Definition \ref{H_ij}.
\end{proof}

%By Lemma \ref{orthog basis MM^*,M^*M}, the matrices $A_iA^*_j$, $A^*_jA_i$ are nonzero.
%Referring to Definition \ref{H_ij}, we observe that the following are equivalent $(i)$ $det(H_{i,j})=0$; $(ii)$  $A_iA^*_j,$ $A^*_jA_i$ are linearly dependent.
%Assume $(i),(ii)$ hold.
%Then $A_iA^*_j$ is a nonzero scalar multiple of $A^*_jA_i$.
%Moreover $A_iA^*_j\in MM^*\cap M^*M$.

\begin{lemma} \label{det(H_ij)}
	For $0\leq i,j\leq D$ we have
	$$det(H_{i,j})=|X|^2k^2_im^2_j(1-(u_i(\theta_j))^2).$$
\end{lemma}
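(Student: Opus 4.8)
The plan is to compute the determinant directly from the explicit form of $H_{i,j}$ given in Lemma \ref{matrix H}. First I would recall that Lemma \ref{matrix H} expresses $H_{i,j}$ as the scalar $|X|k_im_j$ times the $2\times 2$ matrix with $1$'s on the diagonal and $u_i(\theta_j)$ in the off-diagonal positions. Since the determinant is multiplicative under scalar multiplication of a $2\times 2$ matrix by a constant $c$ — picking up a factor $c^2$ — we get
\[
det(H_{i,j}) = (|X|k_im_j)^2 \, det\begin{pmatrix} 1 & u_i(\theta_j) \\ u_i(\theta_j) & 1 \end{pmatrix}.
\]

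Next I would evaluate the remaining $2\times 2$ determinant as $1 \cdot 1 - u_i(\theta_j)\cdot u_i(\theta_j) = 1 - (u_i(\theta_j))^2$. Combining this with the scalar factor $|X|^2 k_i^2 m_j^2$ gives exactly the claimed formula $det(H_{i,j}) = |X|^2 k_i^2 m_j^2 (1 - (u_i(\theta_j))^2)$.

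There is essentially no obstacle here: the statement is an immediate corollary of Lemma \ref{matrix H}, and the proof is a one-line $2\times 2$ determinant computation. The only thing worth being careful about is the bookkeeping of the scalar: factoring $|X|k_im_j$ out of a $2\times 2$ matrix squares it in the determinant, which is what produces the $|X|^2 k_i^2 m_j^2$ prefactor rather than $|X| k_i m_j$. I would present this as a short direct computation rather than invoking any further machinery.
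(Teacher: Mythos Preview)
Your proposal is correct and follows exactly the approach of the paper, which simply records the result as ``Immediate from Lemma~\ref{matrix H}.'' You have merely written out the one-line $2\times 2$ determinant computation that the paper leaves implicit.
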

\begin{proof}
	Immediate from  Lemma \ref{matrix H}.
\end{proof}

\begin{corollary} \label{det(H_ij)=0}
	For $0\leq i,j\leq D$, 
	$det(H_{i,j})=0$ if and only if $u_i(\theta_j)=\pm1$.
\end{corollary}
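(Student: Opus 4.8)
The plan is to obtain the statement as an immediate consequence of the determinant formula in Lemma \ref{det(H_ij)}, once the nonvanishing of the scalar prefactor is noted. That lemma asserts $\det(H_{i,j}) = |X|^2 k_i^2 m_j^2 \bigl(1 - (u_i(\theta_j))^2\bigr)$, so the whole corollary reduces to the observation that $|X|^2 k_i^2 m_j^2$ is a positive real number, together with the elementary fact that for $t \in \mathbb{C}$ one has $1 - t^2 = 0$ if and only if $t = \pm 1$.

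First I would record why $|X|\, k_i\, m_j \neq 0$. We have $|X| \geq 1$ since $X$ is nonempty. Next, $k_i = p^0_{ii} = |\Gamma_i(x)|$, and since $\Gamma$ is distance-regular of diameter $D$ with $0 \leq i \leq D$, the set $\Gamma_i(x)$ is nonempty, so $k_i \geq 1$. Finally, since the primitive idempotent $E_j$ is nonzero, its rank $m_j$ satisfies $m_j \geq 1$. Hence $|X|^2 k_i^2 m_j^2 > 0$.

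Then I would divide the formula of Lemma \ref{det(H_ij)} through by the positive quantity $|X|^2 k_i^2 m_j^2$ to conclude that $\det(H_{i,j}) = 0$ if and only if $1 - (u_i(\theta_j))^2 = 0$; factoring $1 - t^2 = (1-t)(1+t)$ over $\mathbb{C}$ then gives $u_i(\theta_j) = 1$ or $u_i(\theta_j) = -1$, which is the assertion. (If one wishes, one can add the remark that the $u_i$ have real coefficients and $\theta_j$ is real, so $u_i(\theta_j)\in\mathbb{R}$, but this is not needed for the equivalence.) There is essentially no obstacle here; the only point requiring a moment's care is the justification that each of $|X|$, $k_i$, $m_j$ is nonzero, which is immediate from the definitions recalled in the Preliminaries.
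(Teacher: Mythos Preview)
Your proof is correct and follows exactly the paper's approach: the paper simply writes ``Immediate from Lemma \ref{det(H_ij)}'', and you have unpacked precisely why this is immediate, namely that the prefactor $|X|^2k_i^2m_j^2$ is positive so the vanishing of $\det(H_{i,j})$ is equivalent to $1-(u_i(\theta_j))^2=0$.
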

\begin{proof}
	Immediate from  Lemma \ref{det(H_ij)}.
\end{proof}

\begin{lemma} \label{square norm}
	The following elements are orthogonal: 
%	 hold for $0\leq i,j\leq D$:
	$$A_iA_j^*+A_j^*A_i,  \qquad A_iA_j^*-A_j^*A_i.$$
	Moreover 
	\begin{align*}
	||A_iA_j^*+A_j^*A_i||^2=2|X|k_im_j(1+u_i(\theta_j)),\\
	||A_iA_j^*-A_j^*A_i||^2=2|X|k_im_j(1-u_i(\theta_j)).
	\end{align*}
%	\begin{enumerate}[(i)]
%	\item 	$||A_iA_j^*-u_i(\theta_j)A_j^*A_i||^2=|X|k_im_j(1-(u_i(\theta_j))^2).$
%	\item 	$||u_i(\theta_j)A_iA_j^*-A_j^*A_i||^2=|X|k_im_j(1-(u_i(\theta_j))^2).$
%	\end{enumerate}
\end{lemma}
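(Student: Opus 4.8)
The plan is to compute the two relevant inner products directly using Lemma~\ref{innerproduct}, expand the squared norms by bilinearity, and simplify.

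First I would observe that $A_iA_j^*+A_j^*A_i$ and $A_iA_j^*-A_j^*A_i$ both lie in $\mathrm{Span}(A_iA_j^*, A_j^*A_i)$, so their inner product is controlled entirely by the $2\times 2$ matrix $H_{i,j}$ from Lemma~\ref{matrix H}. Write $P=A_iA_j^*$ and $Q=A_j^*A_i$ for brevity. By Lemma~\ref{innerproduct}(ii) (with $r=i$, $s=j$) we have $\langle P,P\rangle=|X|k_im_j$, and by symmetry $\langle Q,Q\rangle=|X|k_im_j$ as well; by Lemma~\ref{innerproduct}(i) (with $r=j$, $s=i$) we have $\langle P,Q\rangle=|X|k_im_ju_i(\theta_j)$. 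Since all matrices involved are real, $\langle Q,P\rangle=\langle P,Q\rangle$, so the inner product is symmetric here and there is no conjugation subtlety.

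Then I would expand:
\begin{align*}
\langle P+Q,\ P-Q\rangle &= \langle P,P\rangle - \langle P,Q\rangle + \langle Q,P\rangle - \langle Q,Q\rangle \\
&= |X|k_im_j - |X|k_im_ju_i(\theta_j) + |X|k_im_ju_i(\theta_j) - |X|k_im_j = 0,
\end{align*}
which gives orthogonality. For the norms,
\begin{align*}
\|P+Q\|^2 &= \langle P,P\rangle + 2\langle P,Q\rangle + \langle Q,Q\rangle = 2|X|k_im_j\bigl(1+u_i(\theta_j)\bigr),\\
\|P-Q\|^2 &= \langle P,P\rangle - 2\langle P,Q\rangle + \langle Q,Q\rangle = 2|X|k_im_j\bigl(1-u_i(\theta_j)\bigr),
\end{align*}
as claimed.

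There is no real obstacle here; the statement is an immediate bookkeeping consequence of Lemma~\ref{innerproduct}. The only point requiring a word of care is that the inner product is Hermitean rather than symmetric in general, so one should note that $A_i$, $A_j^*$, and hence $P$, $Q$, are real matrices, which makes $\langle P,Q\rangle=\langle Q,P\rangle$ and justifies the cancellation in the cross terms; this is exactly the kind of detail one would state in a single sentence at the start of the proof.
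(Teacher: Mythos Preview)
Your proof is correct and follows essentially the same approach as the paper, which simply says ``Immediate from Lemma~\ref{matrix H}.'' You have just written out explicitly the bilinear expansion that the paper leaves to the reader; the only extra care you take (noting that the matrices are real so $\langle P,Q\rangle=\langle Q,P\rangle$) is a harmless clarification.
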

\begin{proof}
	Immediate from  Lemma \ref{matrix H}.
\end{proof}

\begin{lemma} \label{common values AiAj^*}
	The following $(i)$--$(iii)$ hold for $0\leq i,j\leq D$:
		\begin{enumerate}[(i)]
		\item  Assume $u_i(\theta_j)=1$. 
			Then $A_iA^*_j=A^*_jA_i$ and this common value is nonzero.
		\item  Assume $u_i(\theta_j)=-1$. 
			Then $A_iA^*_j=-A^*_jA_i$ and this common value is nonzero.
		\item  Assume $u_i(\theta_j)\neq \pm1$. 
			Then $A_iA^*_j,A^*_jA_i$ are linearly independent.
		\end{enumerate}
\end{lemma}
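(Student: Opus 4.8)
The plan is to derive all three parts directly from the norm computations in Lemma \ref{square norm}, together with the elementary linear-algebra fact that two vectors in an inner product space are linearly dependent if and only if one of the two orthogonal combinations $v\pm w$ has zero norm. First I would observe that $A_iA_j^*+A_j^*A_i$ and $A_iA_j^*-A_j^*A_i$ span the same subspace as $A_iA_j^*$ and $A_j^*A_i$, so linear independence of the pair $A_iA_j^*,A_j^*A_i$ is equivalent to both of $A_iA_j^*\pm A_j^*A_i$ being nonzero. Since the inner product $\langle\,,\,\rangle$ is positive definite, a matrix is zero if and only if its square norm is zero, so I can read everything off the two formulas in Lemma \ref{square norm}.

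For part (i), assume $u_i(\theta_j)=1$. Then $\|A_iA_j^*-A_j^*A_i\|^2=2|X|k_im_j(1-1)=0$, hence $A_iA_j^*-A_j^*A_i=0$, i.e. $A_iA^*_j=A^*_jA_i$. For the claim that this common value is nonzero, I compute $\|A_iA_j^*+A_j^*A_i\|^2=2|X|k_im_j(1+1)=4|X|k_im_j$, which is strictly positive because $|X|\ge 1$ and $k_i=p^0_{ii}=k_i$, $m_i$ are positive integers (each $k_i=|\Gamma_i(x)|\ge 1$ for $0\le i\le D$ since $\Gamma$ is connected of diameter $D$, and $m_i=\operatorname{rank}(E_i)\ge 1$); therefore $A_iA_j^*+A_j^*A_i=2A_iA_j^*\neq 0$, so $A_iA_j^*\neq 0$. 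Part (ii) is the mirror image: if $u_i(\theta_j)=-1$ then $\|A_iA_j^*+A_j^*A_i\|^2=0$ gives $A_iA^*_j=-A^*_jA_i$, while $\|A_iA_j^*-A_j^*A_i\|^2=4|X|k_im_j>0$ forces the common value to be nonzero.

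For part (iii), assume $u_i(\theta_j)\neq\pm 1$. Then $1+u_i(\theta_j)\neq 0$ and $1-u_i(\theta_j)\neq 0$, so both $\|A_iA_j^*+A_j^*A_i\|^2$ and $\|A_iA_j^*-A_j^*A_i\|^2$ are nonzero (again using $|X|k_im_j>0$), hence $A_iA_j^*+A_j^*A_i$ and $A_iA_j^*-A_j^*A_i$ are both nonzero; since Lemma \ref{square norm} tells us they are also orthogonal, they are linearly independent, and therefore so is the pair $A_iA_j^*,A_j^*A_i$. Alternatively one can cite Corollary \ref{det(H_ij)=0}: the Gram matrix $H_{i,j}$ is nonsingular exactly when $u_i(\theta_j)\neq\pm 1$, which is equivalent to linear independence of the two matrices whose inner products it records.

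I do not anticipate a real obstacle; the only point that needs a sentence of care is the positivity of the scalar $|X|k_im_j$, which guarantees that "norm squared $=0$" is genuinely equivalent to "the matrix is zero" being applied in the nontrivial direction and, in parts (i) and (ii), that the surviving combination is actually nonzero rather than vacuously so. Everything else is immediate from Lemma \ref{square norm} and positive-definiteness of $\langle\,,\,\rangle$.
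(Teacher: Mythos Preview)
Your proof is correct and follows essentially the same approach as the paper: parts (i) and (ii) are deduced from Lemma~\ref{square norm} exactly as the paper does, and for part (iii) the paper simply cites the Gram determinant in Lemma~\ref{det(H_ij)}, which you mention as your alternative (your primary argument via the orthogonality and nonvanishing of $A_iA_j^*\pm A_j^*A_i$ is an equally valid, equivalent route).
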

\begin{proof}
		$(i),(ii)$ 	Immediate from  Lemma \ref{square norm}.\\
		$(iii)$	Immediate from  Lemma \ref{det(H_ij)}.
	\end{proof}

%\begin{lemma} \label{rank H_ij}
%	The following $(i),(ii)$ hold for $0\leq i,j\leq D$:
%	\begin{enumerate}[(i)]
%		\item  Assume $u_i(\theta_j)=\pm1$. 
%			Then $H_{i,j}$ has rank $1$.
%		\item  Assume $u_i(\theta_j)\neq\pm1$. 
%			Then $H_{i,j}$ has rank $2$.
%	\end{enumerate}
%\end{lemma}
%\begin{proof}
%	Immediate from   Lemma \ref{matrix H} and Corollary \ref{det(H_ij)=0}.
%\end{proof}

%\begin{lemma} \label{dim(Span(A_iA^*_j,A^*_jA_i))}
%	For $0\leq i,j\leq D$,
%	\[ dim(Span(A_iA^*_j,A^*_jA_i))=
%	\begin{cases}
%		1       & \quad \text{if } \quad u_i(\theta_j)=\pm1,\\
%		2  		& \quad \text{if } \quad u_i(\theta_j)\neq\pm1.\\
%	\end{cases}
%	\]
%\end{lemma}
%\begin{proof}
%	Immediate from Definition \ref{H_ij} and Lemma \ref{rank H_ij}.
%\end{proof}

\begin{lemma} \label{ortho u_i}
	For $0\leq i,j \leq D$ we give an orthogonal basis for $Span(A_iA^*_j,A^*_jA_i)$.
	\begin{center}
	\begin{tabular}{c|c|c}
%		\hline 
		case		 & orthogonal basis 	& dimension \\ [.1em] \hline 
		$u_i(\theta_j)=\pm1$ 		& $A_iA^*_j$ 	& $1$\\ [.4em]
		$u_i(\theta_j)\neq\pm1$ 	&  $A_iA^*_j+A^*_jA_i,$ \; $A_iA^*_j-A^*_jA_i$	& $2$\\ [.4em] 
	\end{tabular}
	\end{center}
\end{lemma}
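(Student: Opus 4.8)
The plan is to prove Lemma \ref{ortho u_i} by splitting into the two cases governed by the value of $u_i(\theta_j)$, relying entirely on the norm and orthogonality computations already established in Lemmas \ref{square norm} and \ref{common values AiAj^*}.

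First, suppose $u_i(\theta_j) = \pm 1$. By Lemma \ref{common values AiAj^*}(i),(ii), the matrices $A_iA_j^*$ and $A_j^*A_i$ are scalar multiples of one another ($A_iA_j^* = \pm A_j^*A_i$), and the common matrix $A_iA_j^*$ is nonzero. Hence $\mathrm{Span}(A_iA_j^*, A_j^*A_i) = \mathrm{Span}(A_iA_j^*)$ is one-dimensional, and the single nonzero vector $A_iA_j^*$ is trivially an orthogonal basis for it. This gives the first row of the table.

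Next, suppose $u_i(\theta_j) \neq \pm 1$. By Lemma \ref{common values AiAj^*}(iii), $A_iA_j^*$ and $A_j^*A_i$ are linearly independent, so the span is two-dimensional. By Lemma \ref{square norm}, the two vectors $A_iA_j^* + A_j^*A_i$ and $A_iA_j^* - A_j^*A_i$ are orthogonal to each other; moreover their squared norms are $2|X|k_im_j(1 + u_i(\theta_j))$ and $2|X|k_im_j(1 - u_i(\theta_j))$ respectively, both of which are nonzero precisely because $u_i(\theta_j) \neq \pm 1$ (and because $|X|, k_i, m_j$ are positive). Thus these two vectors are nonzero, mutually orthogonal, and lie in the two-dimensional space $\mathrm{Span}(A_iA_j^*, A_j^*A_i)$, so they form an orthogonal basis for it. This gives the second row of the table, completing the proof.

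There is essentially no obstacle here: every ingredient — the orthogonality of the symmetric and antisymmetric combinations, the norm formulas, the linear (in)dependence of $A_iA_j^*$ and $A_j^*A_i$ — has already been recorded, so the argument is a short case analysis assembling these facts. The only point requiring a word of care is noting that $|X|k_im_j \neq 0$ so that the norm expressions genuinely vanish only when $1 \pm u_i(\theta_j) = 0$; this is immediate since $k_i$ and $m_j$ are ranks/valencies of nonempty objects.
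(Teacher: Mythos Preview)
Your proof is correct and follows essentially the same approach as the paper, which simply cites Definition~\ref{H_ij} and Lemmas~\ref{matrix H}, \ref{common values AiAj^*}; you invoke Lemma~\ref{square norm} (itself an immediate consequence of Lemma~\ref{matrix H}) in place of the Gram matrix directly, but the underlying argument is identical.
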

\begin{proof}
	Follows from Definition \ref{H_ij} and Lemmas \ref{matrix H}, \ref{common values AiAj^*}.
\end{proof}

\begin{corollary} \label{orthog basis MM^*+M^*M}
	The following is an orthogonal basis for $MM^*+M^*M$:
	$$\{A_iA^*_j+A^*_jA_i, A_iA^*_j-A^*_jA_i|0\leq i,j \leq D, u_i(\theta_j)\neq \pm1\}$$
	$$\cup\{A_iA^*_j|0\leq i,j \leq D, u_i(\theta_j)=\pm1\}.$$
\end{corollary}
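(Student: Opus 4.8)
The plan is to combine the block decomposition of Lemma~\ref{MM^*+M^*M} with the per-block orthogonal bases of Lemma~\ref{ortho u_i}. First, recall from Lemma~\ref{MM^*+M^*M} that $MM^*+M^*M$ is the orthogonal direct sum, over all pairs $(i,j)$ with $0\leq i,j\leq D$, of the subspaces $Span(A_iA^*_j,A^*_jA_i)$. In particular, any vector lying in the block $Span(A_iA^*_j,A^*_jA_i)$ is orthogonal to any vector lying in a distinct block $Span(A_rA^*_s,A^*_sA_r)$ with $(i,j)\neq(r,s)$.

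Next, for each fixed pair $(i,j)$, Lemma~\ref{ortho u_i} supplies an orthogonal basis for the block $Span(A_iA^*_j,A^*_jA_i)$: when $u_i(\theta_j)=\pm1$ this basis is $\{A_iA^*_j\}$, and when $u_i(\theta_j)\neq\pm1$ it is $\{A_iA^*_j+A^*_jA_i,\;A_iA^*_j-A^*_jA_i\}$. Forming the union of these bases over all $(i,j)$ with $0\leq i,j\leq D$ produces exactly the set displayed in the statement.

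It then remains to observe that the union of orthogonal bases of the summands of an orthogonal direct sum is itself an orthogonal basis of the whole space. Orthogonality of two vectors lying in the same block holds by Lemma~\ref{ortho u_i}; orthogonality of two vectors lying in different blocks holds by the orthogonal-direct-sum property from Lemma~\ref{MM^*+M^*M}; and the union spans $MM^*+M^*M$ since each block is spanned by its contribution to the union. Each listed vector is nonzero --- by Lemma~\ref{common values AiAj^*} in the case $u_i(\theta_j)=\pm1$, and by Lemma~\ref{square norm} (where the two displayed vectors have strictly positive square norm) in the case $u_i(\theta_j)\neq\pm1$ --- and an orthogonal set of nonzero vectors is linearly independent because $\langle\;,\;\rangle$ is positive definite. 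Hence the union is an orthogonal basis for $MM^*+M^*M$. This argument is entirely routine; there is no genuine obstacle, the only point requiring a moment's care being the cross-block orthogonality, which is precisely the content of the word ``orthogonal'' in the direct sum of Lemma~\ref{MM^*+M^*M}.
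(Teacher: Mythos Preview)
Your proof is correct and follows exactly the same approach as the paper: the paper's proof reads simply ``Immediate from Lemmas~\ref{MM^*+M^*M} and~\ref{ortho u_i},'' and your argument is a careful unpacking of precisely those two ingredients. The extra verification that each listed vector is nonzero (via Lemmas~\ref{common values AiAj^*} and~\ref{square norm}) is a welcome touch that the paper leaves implicit.
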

\begin{proof}
	Immediate from Lemmas  \ref{MM^*+M^*M} and \ref{ortho u_i}.
\end{proof}

\begin{definition} \label{P}
	Define an integer $P$ as follows:
	$$P=|\{(i,j)|1\leq i,j \leq D, u_i(\theta_j)=\pm1\}|.$$
\end{definition}

\begin{lemma} \label{P=0}
	$P=0$ if and only if $\Gamma$ is primitive.
\end{lemma}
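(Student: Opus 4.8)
The plan is to connect the definition of $P$ to the combinatorial notion of primitivity. Recall that $\Gamma$ is \emph{imprimitive} precisely when some distance matrix $A_i$ with $1 \le i \le D$ lies in a proper nontrivial closed subset of the Bose--Mesner algebra; by the standard theory of distance-regular graphs (see \cite[p.~141]{BCN}), for $D \ge 3$ this happens exactly when one of the following holds: $\Gamma$ is bipartite, or $\Gamma$ is antipodal. Rather than route through that classification, I would instead work directly with the polynomials $u_i$ and the eigenvalues $\theta_j$. The key observation is the following: $u_i(\theta_j) = \pm 1$ for some pair $1 \le i,j \le D$ if and only if $A_i E_j = \pm k_i E_j$ (using $A_i E_j = k_i u_i(\theta_j) E_j$ from the excerpt), i.e. $E_j$ is a primitive idempotent on which the $i$-th distance matrix acts as the scalar $\pm k_i$. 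Since $|u_i(\theta_0)| = u_i(k) = 1$ always (because $u_i(\theta_0) = 1$), the genuinely new information in $P$ is whether such a scalar action occurs for a \emph{nontrivial} idempotent.

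First I would establish the easy direction: if $\Gamma$ is imprimitive, exhibit a pair $(i,j)$ with $1 \le i,j \le D$ and $u_i(\theta_j) = \pm 1$. If $\Gamma$ is bipartite, take $i = D$ is not needed; more simply, bipartiteness means $\theta_D = -k$ (using the adjacency eigenvalue), and then $u_1(\theta_D) = \theta_D / k = -1$, so $(1,D)$ works. If $\Gamma$ is antipodal, the antipodal classes have size $r \ge 2$ and $A_D + I$ (when classes have size $2$) or more generally the appropriate distance matrix acts as a $\{0,1\}$-combination that yields $u_D(\theta_j) = 1$ for the idempotents in the ``trivial'' part of the antipodal quotient; concretely, antipodality gives that $I + A_D/(k_D)$-type relations force $u_D(\theta_j) \in \{1, \text{something}\}$ for a suitable nontrivial $j$. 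I would cite \cite[Proposition 4.2.2, p.~141]{BCN} for the precise statement that imprimitivity yields a distance matrix with only two distinct eigenvalues, which is exactly $u_i(\theta_j) = \pm 1$ for all but one value of $j$.

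For the converse — the direction I expect to be the main obstacle — suppose $P \neq 0$, so $u_i(\theta_j) = \pm 1$ for some $1 \le i,j \le D$. By \eqref{A_j} and $E_jE_s = \delta_{js}E_s$ we get $A_i E_j = \pm k_i E_j$ with $E_j \neq |X|^{-1}J$. Since $A_i$ is a symmetric $01$-matrix with constant row sum $k_i$, its largest eigenvalue is $k_i$ with multiplicity equal to the number of connected components of the graph $\Gamma_i = (X, \text{edges at distance } i)$, and $-k_i$ is an eigenvalue only if some component of $\Gamma_i$ is bipartite. In either case, $A_i$ has the eigenvalue $\pm k_i$ on a subspace strictly larger than $\mathrm{Span}(\mathbf{1})$ inside the eigenspace structure indexed by the $E_j$'s; this forces $\Gamma_i$ to be disconnected, or bipartite with a nontrivial bipartition that is ``visible'' in $M$. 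Either conclusion gives a nontrivial equitable partition refining the distance partition, hence a nontrivial closed subset of $M$, so $\Gamma$ is imprimitive. The delicate point is ruling out that the $\pm k_i$ action on $E_j$ is somehow consistent with primitivity; this is handled by the fact that for a primitive graph, each nontrivial $A_i$ generates $M$ (equivalently, $A_i$ has $D+1$ distinct eigenvalues), so $|u_i(\theta_j)| < 1$ for all $j \ge 1$. I would state this last fact as a consequence of \cite[Theorem~11.4.1]{BCN} or prove it in two lines: if some $A_i$ had a repeated eigenvalue among $\{\theta_0, \dots, \theta_D\}$-indexed values, or the value $\pm k_i$ at a nontrivial idempotent, the corresponding spectral projection would be a nontrivial idempotent in $M$ commuting with everything, contradicting that the association scheme is primitive.

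Assembling the two directions completes the proof. The bookkeeping to watch: the case $i$ or $j$ equal to $0$ is explicitly excluded in Definition~\ref{P}, and one must confirm $u_i(\theta_0) = 1$ always (immediate from $u_i(k) = 1$, which follows by induction from the recursion $\lambda u_i = c_i u_{i-1} + a_i u_i + b_i u_{i+1}$ evaluated at $\lambda = k$ together with $c_i + a_i + b_i = k$), so that no pair with a zero index would have contributed to $P$ anyway — meaning $P$ counts exactly the nontrivial coincidences, which is what makes the equivalence with primitivity clean.
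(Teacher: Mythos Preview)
The paper's own proof is a single sentence: it invokes \cite[Proposition~4.4.7]{BCN}, which states precisely that a distance-regular graph is imprimitive if and only if $u_i(\theta_j)\in\{1,-1\}$ for some $1\le i,j\le D$. Your proposal is therefore not so much an alternative proof of the lemma as an attempted reconstruction of that proposition. Compared with the paper's approach this is a genuinely different route---direct rather than by citation---and the Perron--Frobenius half of your argument (the case $u_i(\theta_j)=1$, giving $\Gamma_i$ disconnected) is correct and is essentially how \cite{BCN} proves it.

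There is, however, a real gap in your treatment of the case $u_i(\theta_j)=-1$, and your fallback ``two-line'' argument is wrong on two counts. First, it is \emph{not} true that in a primitive distance-regular graph every $A_i$ with $i\ge 1$ generates $M$ (equivalently, has $D+1$ distinct eigenvalues); primitivity only guarantees that each $\Gamma_i$ is connected, i.e.\ that the eigenvalue $k_i$ is simple, which rules out $u_i(\theta_j)=1$ for $j\ge 1$ but says nothing about repetitions among the remaining eigenvalues. Second, even if the eigenvalues of $A_i$ were distinct, that would not by itself exclude $u_i(\theta_j)=-1$. Your phrase ``the corresponding spectral projection would be a nontrivial idempotent in $M$ commuting with everything'' does not pinpoint the obstruction: $M$ is commutative, so every idempotent in $M$ commutes with everything in $M$; what you actually need is that the projector is, up to scale, a $01$-matrix and hence a sum of some $A_h$'s. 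For the $-1$ case the clean fix is: if $\Gamma_i$ is connected and $-k_i$ is an eigenvalue, then $\Gamma_i$ is bipartite and $-k_i$ is simple, forcing $m_j=1$; then $|X|(E_0+E_j)/2$ is a $01$-matrix in $M$ encoding the bipartition, hence a nontrivial union of relations, and $\Gamma$ is imprimitive. With that step inserted (and the incorrect ``$A_i$ generates $M$'' claim removed), your argument goes through.
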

\begin{proof}
	Immediate from Definition \ref{P} and \cite [Proposition~4.4.7]{BCN}.
\end{proof}

\begin{lemma} \label{dim MM^* + M^*M}
	$dim(MM^*+M^*M)=2D^2+2D+1-P$.
\end{lemma}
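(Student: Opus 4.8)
The plan is to compute $\dim(MM^*+M^*M)$ directly from the orthogonal direct sum decomposition in Lemma~\ref{MM^*+M^*M} together with the dimension formula in Corollary~\ref{dim(MM^*+M^*M)}. By Corollary~\ref{dim(MM^*+M^*M)} we have
\[
\dim(MM^*+M^*M)=\sum_{i=0}^{D}\sum_{j=0}^{D}\dim\bigl(Span(A_iA^*_j,A^*_jA_i)\bigr),
\]
so it suffices to know $\dim(Span(A_iA^*_j,A^*_jA_i))$ for each pair $(i,j)$ with $0\leq i,j\leq D$. This is exactly what Lemma~\ref{ortho u_i} (equivalently Lemma~\ref{common values AiAj^*}) supplies: the dimension is $1$ when $u_i(\theta_j)=\pm1$ and $2$ otherwise.

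The next step is to count the pairs in each case. There are $(D+1)^2$ ordered pairs $(i,j)$ in total. Let $Q=|\{(i,j)\mid 0\leq i,j\leq D,\ u_i(\theta_j)=\pm1\}|$ be the number of pairs falling in the first case. Then
\[
\dim(MM^*+M^*M)=1\cdot Q+2\cdot\bigl((D+1)^2-Q\bigr)=2(D+1)^2-Q.
\]
So everything reduces to evaluating $Q$, and the remaining work is to relate $Q$ to the quantity $P$ of Definition~\ref{P}, which only counts pairs with $1\leq i,j\leq D$. The pairs $(i,j)$ with $0\leq i,j\leq D$ that are excluded from $P$ are those with $i=0$ or $j=0$. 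I would handle these boundary pairs explicitly: since $u_0=1$, every pair $(0,j)$ has $u_0(\theta_j)=1$, contributing $D+1$ pairs; and since $u_i(\theta_0)=u_i(k)=1$ for all $i$ (the valency $k=\theta_0$ is the eigenvalue for $E_0$, and $A_i E_0 = k_i u_i(\theta_0) E_0$ with $A_i E_0 = k_i E_0$ forces $u_i(\theta_0)=1$), every pair $(i,0)$ also has $u_i(\theta_0)=1$, contributing another $D+1$ pairs, but the pair $(0,0)$ is counted in both, so the total number of pairs with $i=0$ or $j=0$ is $2(D+1)-1=2D+1$. Hence $Q=P+2D+1$, and therefore
\[
\dim(MM^*+M^*M)=2(D+1)^2-(P+2D+1)=2D^2+4D+2-2D-1-P=2D^2+2D+1-P,
\]
as claimed.

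The main obstacle, such as it is, is the bookkeeping on the boundary pairs: one must be certain that $u_i(\theta_0)=1$ for all $i$ so that all $2D+1$ boundary pairs genuinely lie in the $u_i(\theta_j)=\pm1$ case and none is miscounted. This follows from the relation $A_jE_i=k_ju_j(\theta_i)E_i$ recorded in the Preliminaries (just after~\eqref{E_j}), evaluated at $i=0$ using $E_0=|X|^{-1}J$ and $A_jE_0=k_jE_0$, so $k_ju_j(\theta_0)=k_j$ and hence $u_j(\theta_0)=1$. With this in hand the inclusion–exclusion count $2(D+1)-1$ is routine, and the arithmetic simplification to $2D^2+2D+1-P$ completes the proof.
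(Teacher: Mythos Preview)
Your proof is correct and follows essentially the same approach as the paper: both count the contributions to $\dim(MM^*+M^*M)$ from each summand $Span(A_iA^*_j,A^*_jA_i)$ using Lemma~\ref{ortho u_i} (the paper phrases this via the orthogonal basis in Corollary~\ref{orthog basis MM^*+M^*M}) and then invoke Definition~\ref{P}. Your explicit verification that $u_0(\theta_j)=u_i(\theta_0)=1$ and the inclusion--exclusion relating $Q$ to $P$ simply spell out what the paper leaves implicit in the word ``Immediate.''
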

\begin{proof}
	Immediate from Corollary \ref{orthog basis MM^*+M^*M} and Definition \ref{P}.
\end{proof}

\begin{lemma} \label{dim MM^* cap M^*M}
	$dim(MM^*\cap M^*M)=2D+1+P$.
\end{lemma}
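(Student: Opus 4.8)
The plan is to invoke the standard dimension identity for the sum and intersection of two subspaces, namely $\dim(U \cap W) = \dim U + \dim W - \dim(U+W)$, applied with $U = MM^*$ and $W = M^*M$. All three quantities on the right-hand side have already been computed earlier in the section, so the proof reduces to a short arithmetic simplification.

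First I would recall that $\dim(MM^*) = \dim(M^*M) = (D+1)^2$ by Lemma \ref{dim orthog basis MM^*,M^*M}, and that $\dim(MM^*+M^*M) = 2D^2+2D+1-P$ by Lemma \ref{dim MM^* + M^*M}. Then I would substitute these into the dimension identity to get
\[
\dim(MM^* \cap M^*M) = 2(D+1)^2 - (2D^2+2D+1-P) = 2D+1+P,
\]
where the last step is the expansion $2(D^2+2D+1) - 2D^2 - 2D - 1 + P = 2D+1+P$.

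There is essentially no obstacle here: the only thing to be careful about is that the dimension identity requires both $MM^*$ and $M^*M$ to be finite-dimensional subspaces of the same ambient space ${\rm Mat}_X(\mathbb{C})$, which is clear. One could also observe that this is consistent with Lemma \ref{MM^*+M^*M}, which exhibits $MM^*+M^*M$ as an orthogonal direct sum over the blocks $Span(A_iA^*_j, A^*_jA_i)$: the intersection $MM^* \cap M^*M$ decomposes compatibly, contributing dimension $1$ from each block with $i=0$ or $j=0$ (there the common matrix $A_iA^*_j$ lies in both $MM^*$ and $M^*M$, giving $2D+1$ such blocks) together with dimension $1$ from each of the $P$ blocks with $1 \le i,j \le D$ and $u_i(\theta_j) = \pm 1$. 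I would present the short computation via the dimension identity as the main proof and may remark on this block-wise interpretation afterward.

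\begin{proof}
	By linear algebra we have $dim(MM^*\cap M^*M)=dim(MM^*)+dim(M^*M)-dim(MM^*+M^*M)$.
	By Lemma \ref{dim orthog basis MM^*,M^*M} we have $dim(MM^*)=dim(M^*M)=(D+1)^2$.
	By this and Lemma \ref{dim MM^* + M^*M},
	\[
	dim(MM^*\cap M^*M)=2(D+1)^2-(2D^2+2D+1-P)=2D+1+P. \qedhere
	\]
\end{proof}
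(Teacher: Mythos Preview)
Your proof is correct and follows exactly the same approach as the paper: apply the inclusion--exclusion formula for dimensions and invoke Lemmas \ref{dim orthog basis MM^*,M^*M} and \ref{dim MM^* + M^*M}. The only difference is that you write out the arithmetic explicitly, whereas the paper leaves it to the reader.
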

\begin{proof}
	By linear algebra, we have 
	$$dim(MM^*\cap M^*M)=dim(MM^*)+dim(M^*M)-dim(MM^*+M^*M).$$
	By Lemmas \ref{dim orthog basis MM^*,M^*M} and \ref{dim MM^* + M^*M}, the result follows.
\end{proof}

\begin{lemma} \label{orthog basis MM^* cap M^*M}
	The following is an orthogonal basis for  $MM^*\cap M^*M$:
	$$\{A_iA^*_j|0\leq i,j \leq D, u_i(\theta_j)=\pm1\}.$$
\end{lemma}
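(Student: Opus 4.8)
The plan is to identify $MM^*\cap M^*M$ explicitly via the orthogonal decomposition already established, and then read off a basis. By Lemma \ref{MM^*+M^*M} we have the orthogonal direct sum decomposition
$$MM^*+M^*M=\bigoplus_{i=0}^{D}\bigoplus_{j=0}^{D}\mathrm{Span}(A_iA^*_j,A^*_jA_i),$$
so to understand the intersection $MM^*\cap M^*M$ it suffices to intersect with each summand $\mathrm{Span}(A_iA^*_j,A^*_jA_i)$ in turn. The key observation is that $MM^*$ is spanned (orthogonally, by Lemma \ref{orthog basis MM^*,M^*M}) by the $A_iA^*_j$, one contribution from each $(i,j)$-block, and similarly $M^*M$ is spanned by the $A^*_jA_i$; so within the $(i,j)$-block the subspace $MM^*$ contributes exactly $\mathrm{Span}(A_iA^*_j)$ and $M^*M$ contributes exactly $\mathrm{Span}(A^*_jA_i)$.

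First I would argue that because the decomposition in Lemma \ref{MM^*+M^*M} is a direct sum and the distinct blocks are mutually orthogonal, an element of $MM^*\cap M^*M$ lies in the intersection if and only if its component in each block lies in that block's copy of $MM^*$ and simultaneously in that block's copy of $M^*M$. Hence
$$MM^*\cap M^*M=\bigoplus_{i=0}^{D}\bigoplus_{j=0}^{D}\bigl(\mathrm{Span}(A_iA^*_j)\cap\mathrm{Span}(A^*_jA_i)\bigr).$$
Next, by Lemma \ref{common values AiAj^*}: if $u_i(\theta_j)=\pm1$ then $A_iA^*_j=\pm A^*_jA_i$ is a nonzero matrix, so the two one-dimensional spans coincide and the block contributes the one-dimensional space $\mathrm{Span}(A_iA^*_j)$; if $u_i(\theta_j)\neq\pm1$ then $A_iA^*_j$ and $A^*_jA_i$ are linearly independent, so the two distinct lines meet only in $0$ and the block contributes nothing. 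Collecting the surviving blocks gives that $\{A_iA^*_j\mid 0\leq i,j\leq D,\ u_i(\theta_j)=\pm1\}$ spans $MM^*\cap M^*M$, and these matrices are pairwise orthogonal since they belong to distinct blocks of the orthogonal decomposition in Lemma \ref{MM^*+M^*M} (each is nonzero by Lemma \ref{common values AiAj^*}), hence form an orthogonal basis. As a consistency check, the number of such pairs is $2D+1+P$ (the pair $(0,0)$ always contributes since $u_0(\theta_0)=1$, as do $(i,0)$ and $(0,j)$ by \eqref{p^h_{0j}}-type identities giving $u_i(\theta_0)=1$, contributing $2D+1$, plus $P$ more by Definition \ref{P}), which matches Lemma \ref{dim MM^* cap M^*M}.

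The main obstacle is the first step: justifying that membership in $MM^*\cap M^*M$ can be checked block by block. This is not quite automatic from the direct sum alone — one needs that $MM^*$ itself respects the block decomposition, i.e.\ that $MM^*=\bigoplus_{i,j}\mathrm{Span}(A_iA^*_j)$ as an \emph{internal} orthogonal direct sum sitting inside $\bigoplus_{i,j}\mathrm{Span}(A_iA^*_j,A^*_jA_i)$, and likewise for $M^*M$. This follows from Lemma \ref{orthog basis MM^*,M^*M} together with the orthogonality relations of Lemma \ref{innerproduct}(i), which show $\langle A_iA^*_j,A^*_rA_s\rangle=0$ unless $(i,j)=(s,r)$, so the component of $A^*_jA_i$ in the block indexed by $(i,j)$ is all of $A^*_jA_i$ and there is no "leakage" into other blocks. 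Once this bookkeeping is in place the rest is a routine application of the already-proved Lemmas \ref{MM^*+M^*M} and \ref{common values AiAj^*}.
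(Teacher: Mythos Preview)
Your argument is correct, but it takes a different route from the paper's. The paper's proof is a dimension count: by Lemma~\ref{common values AiAj^*} (or Corollary~\ref{det(H_ij)=0}) each $A_iA^*_j$ with $u_i(\theta_j)=\pm1$ lies in both $MM^*$ and $M^*M$, these elements are mutually orthogonal and nonzero by Lemma~\ref{innerproduct}(ii), and there are exactly $2D+1+P$ of them, which matches the dimension of $MM^*\cap M^*M$ already computed in Lemma~\ref{dim MM^* cap M^*M} via inclusion--exclusion. So the paper first determines $\dim(MM^*\cap M^*M)$ indirectly and then exhibits enough independent elements.

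Your approach instead computes the intersection directly from the block decomposition of Lemma~\ref{MM^*+M^*M}, showing that $MM^*\cap M^*M$ splits as $\bigoplus_{i,j}\bigl(\mathrm{Span}(A_iA^*_j)\cap\mathrm{Span}(A^*_jA_i)\bigr)$ and then invoking Lemma~\ref{common values AiAj^*} blockwise. This is more structural: it does not need Lemma~\ref{dim MM^* cap M^*M} at all (you cite it only as a consistency check), and it makes transparent exactly \emph{why} the intersection consists of the listed matrices and nothing else. The cost is the extra bookkeeping you flagged---verifying that both $MM^*$ and $M^*M$ are themselves internally block-decomposed inside $\bigoplus_{i,j}\mathrm{Span}(A_iA^*_j,A^*_jA_i)$---which you handle correctly via Lemma~\ref{innerproduct}. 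The paper's argument is terser given its chosen ordering of results; yours is more self-contained and arguably more illuminating.
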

\begin{proof}
	Immediate from Corollary \ref{det(H_ij)=0} and Lemma \ref{dim MM^* cap M^*M}.
\end{proof}

\begin{lemma} \label{orthog coml_2}
	The matrices $\{A_iA^*_j|1 \leq i,j \leq D, u_i(\theta_j)=\pm1\}$ form an orthogonal basis for the orthogonal complement of $M+M^*$ in $MM^*\cap M^*M$.
\end{lemma}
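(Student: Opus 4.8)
The plan is to obtain the desired orthogonal basis directly from the orthogonal basis of $MM^* \cap M^*M$ in Lemma \ref{orthog basis MM^* cap M^*M}, by separating out the index pairs that contain a $0$. First I would note that $M + M^* \subseteq MM^* \cap M^*M$: since $A_0 = A_0^* = I$ we have $M = MA_0^* = A_0^*M$ and $M^* = A_0M^* = M^*A_0$, so both $M$ and $M^*$ lie in $MM^*$ and in $M^*M$; thus the orthogonal complement in the statement is well defined.

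Next I would pin down which basis matrices $A_iA_j^*$ of Lemma \ref{orthog basis MM^* cap M^*M} involve a zero index. Because $u_0 = 1$, we get $u_0(\theta_j) = 1$ for $0 \le j \le D$; and multiplying (\ref{A_j}) on the right by $E_0 = |X|^{-1}J$, using $E_iE_0 = \delta_{i0}E_0$ and $A_jJ = k_jJ$, yields $u_j(\theta_0) = 1$ for $0 \le j \le D$. Hence every pair $(i,j)$ with $i = 0$ or $j = 0$ satisfies $u_i(\theta_j) = \pm 1$, and the associated basis matrices are precisely $A_0A_j^* = A_j^*$ $(0 \le j \le D)$ together with $A_iA_0^* = A_i$ $(1 \le i \le D)$, which by Lemma \ref{basis M+M*} form an orthogonal basis for $M + M^*$.

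Finally I would conclude by partitioning the orthogonal basis $\{A_iA_j^* : 0 \le i,j \le D,\ u_i(\theta_j) = \pm 1\}$ of $MM^* \cap M^*M$ into the subset just identified with an orthogonal basis of $M + M^*$ and the subset $\{A_iA_j^* : 1 \le i,j \le D,\ u_i(\theta_j) = \pm 1\}$. By the elementary fact that if an orthogonal basis of a space $W$ is partitioned so that one block is an orthogonal basis of a subspace $U$, then the other block is an orthogonal basis of the orthogonal complement of $U$ in $W$, the second subset is an orthogonal basis for the orthogonal complement of $M + M^*$ in $MM^* \cap M^*M$. The only point that is not pure bookkeeping is the identity $u_j(\theta_0) = 1$, which ensures that the index pairs containing a $0$ really belong to the $u_i(\theta_j) = \pm 1$ family; since this follows at once from $A_jJ = k_jJ$, I anticipate no real obstacle.
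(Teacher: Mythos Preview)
Your proposal is correct and follows essentially the same approach as the paper, which simply cites Lemmas~\ref{basis M+M*} and~\ref{orthog basis MM^* cap M^*M}. You have just made explicit the bookkeeping the paper leaves to the reader, in particular the verification that $u_0(\theta_j)=u_j(\theta_0)=1$ so that the index pairs with $i=0$ or $j=0$ in the basis of Lemma~\ref{orthog basis MM^* cap M^*M} recover exactly the orthogonal basis of $M+M^*$ from Lemma~\ref{basis M+M*}.
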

\begin{proof}
	Follows from Lemmas \ref{basis M+M*} and \ref{orthog basis MM^* cap M^*M}.
\end{proof}

\begin{lemma} \label{orthog coml_3}
	The following $(i),(ii)$ hold:
	\begin{enumerate}[(i)]
		\item  	The matrices $\{A_iA^*_j|1 \leq i,j \leq D, u_i(\theta_j)\neq \pm1\}$ form an orthogonal basis for the orthogonal complement of $MM^*\cap M^*M$ in $MM^*$.
		\item  	The matrices $\{A^*_jA_i|1 \leq i,j \leq D, u_i(\theta_j)\neq\pm1\}$ form an orthogonal basis for the orthogonal complement of $MM^*\cap M^*M$ in $M^*M$.
	\end{enumerate}
\end{lemma}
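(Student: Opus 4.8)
The plan is to read everything off the orthogonal bases already in hand. For part (i): by Lemma \ref{orthog basis MM^*,M^*M}(i) the matrices $\{A_iA^*_j\mid 0\le i,j\le D\}$ form an orthogonal basis for $MM^*$, and by Lemma \ref{orthog basis MM^* cap M^*M} the sub-collection $\{A_iA^*_j\mid 0\le i,j\le D,\ u_i(\theta_j)=\pm1\}$ is an orthogonal basis for $MM^*\cap M^*M$. Whenever an orthogonal basis of a subspace is a sub-collection of an orthogonal basis of the ambient space, the remaining basis vectors form an orthogonal basis of the orthogonal complement; concretely, each $A_iA^*_j$ with $u_i(\theta_j)\neq\pm1$ is orthogonal to every member of the above basis of $MM^*\cap M^*M$ by Lemma \ref{orthog basis MM^*,M^*M}(i), these vectors lie in $MM^*$, they are mutually orthogonal, and a count of dimensions shows there are exactly $\dim MM^*-\dim(MM^*\cap M^*M)$ of them. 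Hence $\{A_iA^*_j\mid 0\le i,j\le D,\ u_i(\theta_j)\neq\pm1\}$ is an orthogonal basis for the orthogonal complement of $MM^*\cap M^*M$ in $MM^*$.

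It then remains to tighten the index range from $0\le i,j\le D$ to $1\le i,j\le D$. Since $u_0=1$ we have $u_0(\theta_j)=1$ for $0\le j\le D$; and from the relation $A_jE_i=k_ju_j(\theta_i)E_i$ together with $A_jE_0=k_jE_0$ (as $E_0=|X|^{-1}J$ and each row of $A_j$ sums to $k_j$) we get $u_j(\theta_0)=1$ for $0\le j\le D$. Thus every pair $(i,j)$ with $i=0$ or $j=0$ satisfies $u_i(\theta_j)=1$, so it is excluded from the index set $\{(i,j)\mid u_i(\theta_j)\neq\pm1\}$, and the basis of the previous paragraph is exactly $\{A_iA^*_j\mid 1\le i,j\le D,\ u_i(\theta_j)\neq\pm1\}$. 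This proves (i).

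For part (ii) I would run the same argument with $M^*M$ and its orthogonal basis $\{A^*_jA_i\mid 0\le i,j\le D\}$ from Lemma \ref{orthog basis MM^*,M^*M}(ii). The only new ingredient is that Lemma \ref{orthog basis MM^* cap M^*M} writes the basis of $MM^*\cap M^*M$ using the products $A_iA^*_j$; but by Lemma \ref{common values AiAj^*}(i),(ii), whenever $u_i(\theta_j)=\pm1$ one has $A^*_jA_i=\pm A_iA^*_j$, so $\{A^*_jA_i\mid 0\le i,j\le D,\ u_i(\theta_j)=\pm1\}$ spans the same space $MM^*\cap M^*M$, and being a sub-collection of an orthogonal basis of $M^*M$ with the correct number $2D+1+P$ of elements (Lemma \ref{dim MM^* cap M^*M}) it is an orthogonal basis for it. The complementary vectors $\{A^*_jA_i\mid 0\le i,j\le D,\ u_i(\theta_j)\neq\pm1\}$ then form an orthogonal basis for the orthogonal complement of $MM^*\cap M^*M$ in $M^*M$, and the same remark about $u_0(\theta_j)$ and $u_j(\theta_0)$ shrinks the range to $1\le i,j\le D$. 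There is no real obstacle in any of this — it is bookkeeping on the inner-product formulas of Lemma \ref{innerproduct} — and the only point needing a moment's attention is precisely the index-range reduction, i.e.\ recording that $u_0(\theta_j)$ and $u_j(\theta_0)$ always equal $1$.
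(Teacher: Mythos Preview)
Your proposal is correct and follows essentially the same approach as the paper, which simply cites Lemmas \ref{orthog basis MM^*,M^*M} and \ref{orthog basis MM^* cap M^*M}; you have just filled in the details the paper leaves implicit, notably the index-range reduction via $u_0(\theta_j)=u_j(\theta_0)=1$ and the translation $A^*_jA_i=\pm A_iA^*_j$ (Lemma \ref{common values AiAj^*}) needed for part (ii).
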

\begin{proof}
	Follows from Lemmas \ref{orthog basis MM^*,M^*M} and \ref{orthog basis MM^* cap M^*M}.
\end{proof}

\begin{lemma} \label{orthog coml_4}
	The following $(i),(ii)$ hold:  
	\begin{enumerate}[(i)]
		\item  	The matrices $\{u_i(\theta_j)A_iA^*_j-A^*_jA_i |1 \leq i,j \leq D, u_i(\theta_j)\neq \pm 1
%		,\langle A_iA^*_j-u_i(\theta_j)A^*_jA_i ,A_iA^*_j \rangle = 0
		\}$ form an orthogonal basis for the orthogonal complement of $MM^*$ in $MM^*+M^*M$.
		\item  	The matrices $\{A_iA^*_j-u_i(\theta_j)A^*_jA_i |1 \leq i,j \leq D, u_i(\theta_j)\neq \pm 1
%		,\langle A_iA^*_j-u_i(\theta_j)A^*_jA_i ,A^*_jA_i \rangle = 0
		\}$ form an orthogonal basis for the orthogonal complement of $M^*M$ in $MM^*+M^*M$.
	\end{enumerate}
\end{lemma}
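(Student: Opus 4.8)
The plan is to compute the orthogonal complement one block at a time. By Lemma~\ref{MM^*+M^*M}, the space $MM^*+M^*M$ is the orthogonal direct sum of the subspaces $V_{i,j}:=Span(A_iA^*_j,A^*_jA_i)$ over $0\leq i,j\leq D$, and by Lemma~\ref{orthog basis MM^*,M^*M}(i) the space $MM^*$ is the orthogonal direct sum of the lines $Span(A_iA^*_j)$, each one contained in the corresponding $V_{i,j}$. First I would establish the reduction that the orthogonal complement of $MM^*$ in $MM^*+M^*M$ is the orthogonal direct sum, over $0\leq i,j\leq D$, of the orthogonal complement of $Span(A_iA^*_j)$ in $V_{i,j}$. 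Indeed, given $v\in MM^*+M^*M$ orthogonal to $MM^*$, write $v=\sum_{i,j}v_{i,j}$ with $v_{i,j}\in V_{i,j}$; pairing $v$ with $A_iA^*_j$ and using that the $V_{i,j}$ are mutually orthogonal yields $\langle v_{i,j},A_iA^*_j\rangle=0$ for each $(i,j)$, and conversely any family of such components assembles to an element of $MM^*+M^*M$ orthogonal to all of $MM^*$.

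With this reduction, the problem becomes a $2\times2$ computation in each block, split along the cases of Lemma~\ref{ortho u_i}. If $u_i(\theta_j)=\pm1$, then $V_{i,j}=Span(A_iA^*_j)$ by Lemma~\ref{ortho u_i} (equivalently Lemma~\ref{common values AiAj^*}), so this block contributes nothing to the complement. If $u_i(\theta_j)\neq\pm1$, then $V_{i,j}$ is two-dimensional with basis $A_iA^*_j,A^*_jA_i$, and $\alpha A_iA^*_j+\beta A^*_jA_i$ is orthogonal to $Span(A_iA^*_j)$ exactly when $\langle\alpha A_iA^*_j+\beta A^*_jA_i,A_iA^*_j\rangle=0$. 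Reading the two relevant inner products off the Gram matrix $H_{i,j}$ of Lemma~\ref{matrix H}, this condition is $|X|k_im_j(\alpha+\beta u_i(\theta_j))=0$, whose solution space is the line through $(\alpha,\beta)=(u_i(\theta_j),-1)$; so this block contributes the line spanned by $u_i(\theta_j)A_iA^*_j-A^*_jA_i$, which is nonzero because $A_iA^*_j,A^*_jA_i$ are linearly independent here by Lemma~\ref{common values AiAj^*}(iii) (and one may check its squared norm equals $|X|k_im_j(1-u_i(\theta_j)^2)$).

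It remains to match the index set in the statement. For $i=0$ or $j=0$ one of $A_i,A^*_j$ equals $I=A_0=A^*_0$, so $A_iA^*_j=A^*_jA_i$, hence $V_{i,j}$ is one-dimensional and Corollary~\ref{det(H_ij)=0} forces $u_i(\theta_j)=\pm1$ there. Thus the only blocks contributing a basis vector are those with $1\leq i,j\leq D$ and $u_i(\theta_j)\neq\pm1$, and since distinct blocks are mutually orthogonal, the collected vectors $u_i(\theta_j)A_iA^*_j-A^*_jA_i$ over this index set form an orthogonal basis for the orthogonal complement of $MM^*$ in $MM^*+M^*M$; this is part~(i). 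Part~(ii) is proved identically after interchanging the roles of $MM^*$ and $M^*M$: one instead imposes orthogonality to $A^*_jA_i$, which by Lemma~\ref{matrix H} turns the condition into $|X|k_im_j(\alpha u_i(\theta_j)+\beta)=0$, with solution line spanned by $A_iA^*_j-u_i(\theta_j)A^*_jA_i$.

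The only step with genuine content is the blockwise reduction in the first paragraph; once that orthogonal-direct-sum bookkeeping is in place, everything else is the elementary $2\times2$ linear algebra already packaged in Lemma~\ref{matrix H}. The points requiring care are merely verifying that the blocks with $i=0$ or $j=0$ are automatically excluded by the condition $u_i(\theta_j)\neq\pm1$ and that each candidate basis vector is nonzero, so I do not anticipate a serious obstacle.
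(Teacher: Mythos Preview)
Your proof is correct and, at its core, follows the same route as the paper: both rest on the orthogonal block decomposition $MM^*+M^*M=\bigoplus_{i,j}Span(A_iA^*_j,A^*_jA_i)$ from Lemma~\ref{MM^*+M^*M}, together with the fact that $MM^*$ sits inside this sum as $\bigoplus_{i,j}Span(A_iA^*_j)$. The paper's proof simply cites Lemma~\ref{orthog basis MM^*,M^*M} and Corollary~\ref{orthog basis MM^*+M^*M} and leaves the blockwise $2\times2$ computation implicit; you have unpacked that computation explicitly using the Gram matrix $H_{i,j}$ of Lemma~\ref{matrix H}, verified nonvanishing, and handled the boundary cases $i=0$ or $j=0$ (where $u_0=1$ and $u_i(\theta_0)=1$). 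There is no substantive difference in strategy, only in the level of detail.
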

\begin{proof}
	Follows from Lemma \ref{orthog basis MM^*,M^*M} and Corollary \ref{orthog basis MM^*+M^*M}.
\end{proof}

\begin{lemma} \label{dim orthog coml_2}
	The following subspace has dimension $P$:
	$$(M+M^*)^{\perp}\cap (MM^*\cap M^*M).$$
\end{lemma}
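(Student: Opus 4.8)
The plan is to treat this as an immediate counting consequence of Lemma \ref{orthog coml_2}. The first step is to make sure the nested orthogonal complement is well-posed: I would note that $M = M A_0^* \subseteq MM^*$ and $M^* = A_0 M^* \subseteq MM^*$ because $A_0 = I$, so $M+M^* \subseteq MM^*$, and symmetrically $M+M^* \subseteq M^*M$; hence $M+M^* \subseteq MM^* \cap M^*M$. Consequently $(M+M^*)^\perp \cap (MM^* \cap M^*M)$ is exactly the orthogonal complement of $M+M^*$ in $MM^* \cap M^*M$, irrespective of whether $(M+M^*)^\perp$ is read inside $MM^* \cap M^*M$ or inside all of ${\rm Mat}_X(\mathbb{C})$.

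Next I would simply invoke Lemma \ref{orthog coml_2}, which already provides an orthogonal basis for this complement, namely $\{A_iA^*_j \mid 1\leq i,j\leq D,\ u_i(\theta_j)=\pm1\}$. Since an orthogonal basis is a linearly independent spanning set, its cardinality equals the dimension of the subspace it spans; and distinct pairs $(i,j)$ give distinct (indeed mutually orthogonal, nonzero) matrices by Lemma \ref{innerproduct}(ii). Therefore $\dim\big((M+M^*)^\perp \cap (MM^* \cap M^*M)\big) = |\{(i,j) \mid 1\leq i,j\leq D,\ u_i(\theta_j)=\pm1\}|$, which equals $P$ by Definition \ref{P}.

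I do not expect a real obstacle here: this lemma is pure bookkeeping on top of work already completed, and the only point needing a line of justification is the inclusion $M+M^* \subseteq MM^* \cap M^*M$ that makes the statement meaningful. The proof can honestly be condensed to ``Immediate from Lemma \ref{orthog coml_2} and Definition \ref{P}'', but recording the inclusion and the basis-counting step as above keeps it self-contained and parallel with the surrounding ``dimension'' lemmas.
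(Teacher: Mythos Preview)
Your proposal is correct and matches the paper's approach exactly: the paper's proof is simply ``Immediate from Definition \ref{P} and Lemma \ref{orthog coml_2}.'' Your additional remarks about the inclusion $M+M^*\subseteq MM^*\cap M^*M$ and the basis-counting are accurate elaborations, but the core argument is identical.
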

\begin{proof}
	Immediate from Definition \ref{P} and Lemma \ref{orthog coml_2}.
\end{proof}

\begin{lemma} \label{dim orthog coml_3}
	Each of the following subspaces has dimension $D^2-P$:
	\begin{align*}
	&(MM^*\cap M^*M)^{\perp}\cap MM^*, \qquad \quad &(MM^*\cap M^*M)^{\perp}\cap M^*M,\\
	&(MM^*)^{\perp}\cap (MM^*+ M^*M),  & (M^*M)^{\perp}\cap (MM^*+ M^*M).
	\end{align*} 
	\end{lemma}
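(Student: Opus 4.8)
The plan is to obtain all four dimensions by counting the orthogonal basis vectors already exhibited in Lemmas \ref{orthog coml_3} and \ref{orthog coml_4}. In each of the four cases the relevant orthogonal basis is indexed by the set of pairs $(i,j)$ with $1\leq i,j\leq D$ and $u_i(\theta_j)\neq\pm1$: for the first two subspaces this is Lemma \ref{orthog coml_3}(i),(ii), and for the last two it is Lemma \ref{orthog coml_4}(i),(ii). So it suffices to count this index set.

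First I would observe that the number of pairs $(i,j)$ with $1\leq i,j\leq D$ is $D^2$. By Definition \ref{P}, exactly $P$ of these pairs satisfy $u_i(\theta_j)=\pm1$. Hence the number of pairs $(i,j)$ with $1\leq i,j\leq D$ and $u_i(\theta_j)\neq\pm1$ is $D^2-P$. Consequently each of the four orthogonal bases from Lemmas \ref{orthog coml_3}, \ref{orthog coml_4} has cardinality $D^2-P$, and therefore each of the four subspaces has dimension $D^2-P$.

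I do not expect any real obstacle here: the argument is a bookkeeping step that simply reads off the cardinality of an already-described orthogonal basis. The only point that must be in place is that the vectors listed in Lemmas \ref{orthog coml_3} and \ref{orthog coml_4} genuinely form bases (in particular that they are nonzero and pairwise orthogonal, hence linearly independent, and that they span the stated orthogonal complements), which is exactly what those lemmas assert; given that, the dimension count is immediate.
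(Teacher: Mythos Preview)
Your proposal is correct and matches the paper's own proof, which simply says the result is immediate from Definition \ref{P} and Lemmas \ref{orthog coml_3}, \ref{orthog coml_4}. You have just made explicit the index-set count that the paper leaves implicit.
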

\begin{proof}
	Immediate from Definition \ref{P} and Lemmas \ref{orthog coml_3}, \ref{orthog coml_4}.
\end{proof}

%%%%%%%%%%%%%%%%%%%%%%%%%%%%%%%%%%%%%%%%%%%%%%%%%%%%%%%%%%%%%%%%%%%%%%%%%%
%%%%%%%%%%%%%%%%%%%%%%%%%%%%%%%%%%%%%%%%%%%%%%%%%%%%%%%%%%%%%%%%%%%%%%%%%%
%%%%%%%%%%%%% 												 %%%%%%%%%%%%%
%%%%%%%%%%%%%				Main results	   	     	     %%%%%%%%%%%%%	
%%%%%%%%%%%%% 										         %%%%%%%%%%%%%   
%%%%%%%%%%%%%%%%%%%%%%%%%%%%%%%%%%%%%%%%%%%%%%%%%%%%%%%%%%%%%%%%%%%%%%%%%%
%%%%%%%%%%%%%%%%%%%%%%%%%%%%%%%%%%%%%%%%%%%%%%%%%%%%%%%%%%%%%%%%%%%%%%%%%%

\section{Summary of main results}

We summarize our results in the following theorems.

\begin{theorem} \label{orthog basis and dim}
	In each row of the table below we describe a subspace $U$ in the diagram of Figure \ref{diagram}. 
	We give an orthogonal basis for $U$ along with the dimension of $U$.
	\begin{center}
	\renewcommand{\arraystretch}{1.3}
	\begin{tabular}{ c | c | c }  
%		\hline 
		subspace $U$ & orthogonal basis for $U$ & dimension of $U$\\ 	[.1em]  
		\hline 	
		%		\\ [-.4em]
		$M\cap M^*$ & $I$ 				& $1$ \\ [.4em] 
		
		$M$ 		& $\{A_i\}_{i=0}^D$ 	&$D+1$ \\ [.4em]
		
		$M^*$ 		& $\{A_i^*\}_{i=0}^D$ 	&$D+1$  \\ [.4em]
		
		$M+M^*$ 	& $\{A_D,\dots,A_1,I,A_1^*,\dots,A_D^*\}$ 									& $2D+1$\\ [.4em]

		$MM^*\cap M^*M$ & $\{A_iA^*_j|0\leq i,j \leq D, u_i(\theta_j)=\pm1\}$ & $2D+1+P$\\ [.4em]
		
		$MM^*$ & $\{A_iA^*_j|0\leq i,j \leq D\}$	& $(D+1)^2$ \\ [.4em]
		
		$M^*M$ & $\{A^*_jA_i|0\leq i,j \leq D\}$	& $(D+1)^2$ \\ [.4em]
		
		$MM^*+M^*M$ & $\{A_iA^*_j+A^*_jA_i,A_iA^*_j-A^*_jA_i$ & $2D^2+2D+1-P$ \\ [.4em] 
%		&$A_iA^*_j+u_i(\theta_j)A^*_jA_i $ &$2D+1$\\ [.4em]
		&$|0\leq i,j \leq D, u_i(\theta_j)\neq \pm1\}$ & \\ [.4em]
		&$ \cup\{A_iA^*_j|0\leq i,j \leq D, u_i(\theta_j)=\pm1\}$ &\\ [.4em] 
%\hline
	\end{tabular}
	\\ [1.5em]
\end{center}
\end{theorem}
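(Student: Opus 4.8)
The plan is to verify the table one row at a time, in each case assembling the claimed orthogonal basis and the claimed dimension from lemmas already established in Sections 4 and 5. There is no new mathematical content to produce; the work is purely a matter of citing the right prior results and checking that each row is internally consistent (orthogonal basis $\Rightarrow$ linearly independent $\Rightarrow$ its cardinality equals the stated dimension).

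First I would handle the rows lying in the part of the diagram up to $M+M^*$. For $U=M\cap M^*$, Lemma~\ref{MintersectM^*} gives $M\cap M^*=\mathbb{C}I$, so $\{I\}$ is an orthogonal basis and $\dim U=1$. For $U=M$ and $U=M^*$, Lemma~\ref{orthog basis M,M^*} supplies the orthogonal bases $\{A_i\}_{i=0}^D$ and $\{A_i^*\}_{i=0}^D$, and $\dim M=\dim M^*=D+1$ holds by construction (the $A_i$, resp.\ $A_i^*$, were shown to be a basis). For $U=M+M^*$, Lemma~\ref{basis M+M*} gives the orthogonal basis $\{A_D,\dots,A_1,I,A_1^*,\dots,A_D^*\}$ and Lemma~\ref{dim(M+M*)} gives $\dim U=2D+1$; note this is consistent since the listed set has $2D+1$ elements.

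Next I would handle the rows between $M+M^*$ and $MM^*+M^*M$. For $U=MM^*$ and $U=M^*M$, Lemma~\ref{orthog basis MM^*,M^*M} provides the orthogonal bases $\{A_iA_j^*\mid 0\le i,j\le D\}$ and $\{A_j^*A_i\mid 0\le i,j\le D\}$, and Lemma~\ref{dim orthog basis MM^*,M^*M} gives $\dim U=(D+1)^2$, again matching the cardinality of each basis. For $U=MM^*\cap M^*M$, Lemma~\ref{orthog basis MM^* cap M^*M} gives the orthogonal basis $\{A_iA_j^*\mid 0\le i,j\le D,\ u_i(\theta_j)=\pm 1\}$ and Lemma~\ref{dim MM^* cap M^*M} gives $\dim U=2D+1+P$; here the count $2D+1+P$ follows from Definition~\ref{P} together with the observation that the pairs $(i,j)$ with $u_i(\theta_j)=\pm1$ and $i=0$ or $j=0$ contribute exactly $2D+1$ (since $u_i(\theta_0)=1$ for all $i$ and $u_0(\theta_j)=1$ for all $j$, giving the diagonal-type pairs), while those with $1\le i,j\le D$ contribute $P$. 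For $U=MM^*+M^*M$, Corollary~\ref{orthog basis MM^*+M^*M} gives the stated orthogonal basis and Lemma~\ref{dim MM^* + M^*M} gives $\dim U=2D^2+2D+1-P$.

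The only point that requires a word of care—and hence the closest thing to an ``obstacle''—is reconciling the index ranges: several of the cited lemmas (e.g.\ Lemma~\ref{orthog coml_3}, \ref{orthog coml_4}, Definition~\ref{P}) are phrased with $1\le i,j\le D$, whereas the table rows for $MM^*\cap M^*M$ and $MM^*+M^*M$ run over $0\le i,j\le D$. I would resolve this by explicitly accounting for the boundary pairs as in the previous paragraph: $u_0(\theta_j)=u_i(\theta_0)=1$, so every pair with $i=0$ or $j=0$ falls in the ``$u_i(\theta_j)=\pm1$'' case and contributes a single basis vector $A_iA_j^*$ (with $A_0A_j^*=A_j^*=A_j^*A_0$ and $A_iA_0^*=A_i=A_0^*A_i$ recovering the $M+M^*$ basis from Lemma~\ref{basis M+M*}), which is exactly why $P$ in Definition~\ref{P} is taken over $1\le i,j\le D$ while the dimension formulas $2D+1+P$ and $2D^2+2D+1-P$ come out as stated. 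With that bookkeeping in place, every row of the table is immediate from the cited results, completing the proof. $\qed$
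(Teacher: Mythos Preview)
Your proposal is correct and matches the paper's approach exactly: Theorem~\ref{orthog basis and dim} is presented in the paper as a summary with no separate proof, the content being entirely contained in the lemmas of Sections~4 and~5 that you cite row by row. Your extra bookkeeping on the boundary pairs $i=0$ or $j=0$ (using $u_0=1$ and $u_i(\theta_0)=1$) is a helpful gloss that the paper leaves implicit.
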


\begin{theorem} \label{orthog basis and dim of orthog complement}
In each row of the table below we describe an edge $U\subseteq W$ from the diagram of Figure \ref{diagram}.
We give an orthogonal basis for the orthogonal complement of $U$ in $W$ along with the dimension of this orthogonal complement.
\begin{center}
	\renewcommand{\arraystretch}{1.3}
	\begin{tabular}{ c | c | c | c }  
%	\hline 
		$U$ & $W$  & orthogonal basis   & dimension  \\
		 &   &  for $U^\perp \cap W$  &  of $U^\perp \cap W$ \\ [.1em]  
		\hline

		$M\cap M^*$ & $M$ & $\{A_i\}_{i=1}^D$ 	& $D$\\ [.4em]
		
		$M\cap M^*$ & $M^*$ & $\{A_i^*\}_{i=1}^D$ & $D$ \\ [.4em]
		
		$M$ & $M+M^*$ & $\{A_i^*\}_{i=1}^D$   & $D$\\ [.4em]
		
		$M^*$ & $M+M^*$ & $\{A_i\}_{i=1}^D$ & $D$\\ [.4em]
		
		$M+M^*$ & $MM^*\cap M^*M$ & $\{A_iA^*_j|1\leq i,j \leq D, u_i(\theta_j)=\pm1\}$	& $P$ \\ [.4em]
		
		$MM^*\cap M^*M$ & $MM^*$ & $\{A_iA^*_j|1\leq i,j \leq D, u_i(\theta_j)\neq \pm1\}$	& $D^2-P$ \\ [.4em]
		
		$MM^*\cap M^*M$ & $M^*M$ & $\{A^*_jA_i|1\leq i,j \leq D, u_i(\theta_j)\neq \pm1\}$ 	& $D^2-P$ \\ [.4em]
		
		$MM^*$ & $MM^*+ M^*M$ & $\{u_i(\theta_j)A_iA^*_j-A^*_jA_i|1 \leq i,j \leq D,$ & $D^2-P$ \\ [.4em] 
		& &$  u_i(\theta_j)\neq \pm 1\}$ & \\ [.4em]
		
		$M^*M$ & $MM^*+ M^*M$ & $\{A_iA^*_j-u_i(\theta_j)A^*_jA_i |1 \leq i,j \leq D, $ & $D^2-P$\\ [.4em]
		& &$ u_i(\theta_j)\neq \pm 1\}$ & \\ [.4em] 
%		\hline
		\end{tabular}
\end{center}
\end{theorem}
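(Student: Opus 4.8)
The plan is to observe that both theorems are merely bookkeeping summaries: every row of each table has already been established by a named lemma or corollary earlier in the paper, so the proof consists of citing those results row by row. First I would handle Theorem \ref{orthog basis and dim}. For the subspace $M\cap M^*$ I invoke Lemma \ref{MintersectM^*}; for $M$ and $M^*$ I invoke Lemma \ref{orthog basis M,M^*} together with the fact that $\dim M = \dim M^* = D+1$ noted when $M,M^*$ were constructed; for $M+M^*$ I invoke Lemmas \ref{basis M+M*} and \ref{dim(M+M*)}; for $MM^*\cap M^*M$ I invoke Lemmas \ref{orthog basis MM^* cap M^*M} and \ref{dim MM^* cap M^*M}; for $MM^*$ and $M^*M$ I invoke Lemmas \ref{orthog basis MM^*,M^*M} and \ref{dim orthog basis MM^*,M^*M}; and for $MM^*+M^*M$ I invoke Corollary \ref{orthog basis MM^*+M^*M} and Lemma \ref{dim MM^* + M^*M}.

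Next I would handle Theorem \ref{orthog basis and dim of orthog complement} the same way, matching each edge $U\subseteq W$ to its source. The edges $M\cap M^*\subseteq M$, $M\cap M^*\subseteq M^*$, $M\subseteq M+M^*$, and $M^*\subseteq M+M^*$ come from Lemmas \ref{orthog coml} and \ref{dim orthog coml}. The edge $M+M^*\subseteq MM^*\cap M^*M$ comes from Lemmas \ref{orthog coml_2} and \ref{dim orthog coml_2}. The edges $MM^*\cap M^*M\subseteq MM^*$ and $MM^*\cap M^*M\subseteq M^*M$ come from Lemma \ref{orthog coml_3}(i),(ii) and Lemma \ref{dim orthog coml_3}. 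Finally the edges $MM^*\subseteq MM^*+M^*M$ and $M^*M\subseteq MM^*+M^*M$ come from Lemma \ref{orthog coml_4}(i),(ii) and Lemma \ref{dim orthog coml_3}.

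I would present this as a single short proof block covering both theorems, with a sentence per row listing the applicable reference, rather than reproving anything. There is essentially no mathematical obstacle here; the only thing to be careful about is that the tabulated bases and dimensions are transcribed from the lemmas without index-range errors (for instance that the orthogonal-complement rows correctly drop the index $0$, since $A_0 = A_0^* = I$ lies in the smaller subspace, whereas the rows describing a subspace itself keep the full range $0\le i,j\le D$). So the ``hard part'' is purely editorial: verifying that each table entry matches its cited lemma verbatim, including the condition $u_i(\theta_j)\neq\pm1$ versus $u_i(\theta_j)=\pm1$ and the definition of $P$ from Definition \ref{P}.

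\begin{proof}[Proof of Theorems \ref{orthog basis and dim} and \ref{orthog basis and dim of orthog complement}]
Both statements collect results obtained in Sections 4 and 5. We verify Theorem \ref{orthog basis and dim} row by row. The row for $M\cap M^*$ is Lemma \ref{MintersectM^*}. The rows for $M$ and $M^*$ follow from Lemma \ref{orthog basis M,M^*}, together with $\dim M=\dim M^*=D+1$ from the construction of $M,M^*$. The row for $M+M^*$ is Lemmas \ref{basis M+M*} and \ref{dim(M+M*)}. The row for $MM^*\cap M^*M$ is Lemmas \ref{orthog basis MM^* cap M^*M} and \ref{dim MM^* cap M^*M}. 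The rows for $MM^*$ and $M^*M$ are Lemmas \ref{orthog basis MM^*,M^*M} and \ref{dim orthog basis MM^*,M^*M}. The row for $MM^*+M^*M$ is Corollary \ref{orthog basis MM^*+M^*M} and Lemma \ref{dim MM^* + M^*M}.

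We now verify Theorem \ref{orthog basis and dim of orthog complement} row by row. The rows for the edges $M\cap M^*\subseteq M$, $M\cap M^*\subseteq M^*$, $M\subseteq M+M^*$, and $M^*\subseteq M+M^*$ follow from Lemmas \ref{orthog coml} and \ref{dim orthog coml}. The row for $M+M^*\subseteq MM^*\cap M^*M$ follows from Lemmas \ref{orthog coml_2} and \ref{dim orthog coml_2}. The rows for $MM^*\cap M^*M\subseteq MM^*$ and $MM^*\cap M^*M\subseteq M^*M$ follow from Lemma \ref{orthog coml_3} and Lemma \ref{dim orthog coml_3}. The rows for $MM^*\subseteq MM^*+M^*M$ and $M^*M\subseteq MM^*+M^*M$ follow from Lemma \ref{orthog coml_4} and Lemma \ref{dim orthog coml_3}. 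Here $P$ is as in Definition \ref{P}.
\end{proof}
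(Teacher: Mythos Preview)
Your proposal is correct and matches the paper's approach: Theorems \ref{orthog basis and dim} and \ref{orthog basis and dim of orthog complement} are presented in the paper as summary statements with no accompanying proof, precisely because every row is an earlier lemma, and your row-by-row citations identify exactly the right sources. There is nothing to add beyond the editorial check you already flagged.
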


%%%%%%%%%%%%%%%%%%%%%%%%%%%%%%%%%%%%%%%%%%%%%%%%%%%%%%%%%%%%%%%%%%%%%%%%%%
%%%%%%%%%%%%%%%%%%%%%%%%%%%%%%%%%%%%%%%%%%%%%%%%%%%%%%%%%%%%%%%%%%%%%%%%%%
%%%%%%%%%%%%% 												 %%%%%%%%%%%%%
%%%%%%%%%%%%%			   	    Open problem  	  		     %%%%%%%%%%%%%	
%%%%%%%%%%%%% 										         %%%%%%%%%%%%%   
%%%%%%%%%%%%%%%%%%%%%%%%%%%%%%%%%%%%%%%%%%%%%%%%%%%%%%%%%%%%%%%%%%%%%%%%%%
%%%%%%%%%%%%%%%%%%%%%%%%%%%%%%%%%%%%%%%%%%%%%%%%%%%%%%%%%%%%%%%%%%%%%%%%%%

\section{Open problems}

In this section, we give some open problems and suggestions for future research.
Earlier in the paper we discussed the diagram of Figure \ref{diagram}.
In this discussion we analyzed the diagram up to $MM^*+M^*M$.
The remaining part of the diagram is not completely understood.
We mention what is known.
By Lemma \ref{formula} the subspace $M^*MM^*$ has an orthogonal basis $\{E_i^*A_jE_h^*|0\leq h,i,j \leq D,p^h_{ij}\neq 0\}$.
Similarly, the subspace $MM^*M$ has an orthogonal basis $\{E_iA_j^*E_h|0\leq h,i,j \leq D,q^h_{ij}\neq 0\}$.

\begin{problem}
	Find an orthogonal basis for the following subspaces:
	\begin{enumerate}[(i)]
		\item  $MM^*M\cap M^*MM^*,$
		\item  $MM^*M+M^*MM^*$.
	\end{enumerate}
\end{problem}

\begin{problem}
	In each row of the table below we give an edge $U\subseteq W$ from the diagram of Figure \ref{diagram}.
	Find an orthogonal basis for the orthogonal complement of $U$ in $W$.
	\begin{center}
		\begin{tabular}{c|c}
			%		\hline 
			$U$		 				& $W$	 \\ [.1em] \hline 
			$MM^*+M^*M$ 			& $MM^*M\cap M^*MM^*$ 	\\ [.4em]
			$MM^*M\cap M^*MM^*$ 	& $MM^*M$   \\ [.4em] 
			$MM^*M\cap M^*MM^*$		& $M^*MM^*$  \\ [.4em]
			$MM^*M$					& $MM^*M+M^*MM^*$ \\ [.4em]
			$M^*MM^*$				& $MM^*M+M^*MM^*$\\ [.4em]
		\end{tabular}
	\end{center}
\end{problem}

%%%%%%%%%%%%%%%%%%%%%%%%%%%%%%%%%%%%%%%%%%%%%%%%%%%%%%%%%%%%%%%%%%%%%%%%%%
%%%%%%%%%%%%%%%%%%%%%%%%%%%%%%%%%%%%%%%%%%%%%%%%%%%%%%%%%%%%%%%%%%%%%%%%%%
%%%%%%%%%%%%% 												 %%%%%%%%%%%%%
%%%%%%%%%%%%%		 	    Acknowledgement  	  		     %%%%%%%%%%%%%	
%%%%%%%%%%%%% 											     %%%%%%%%%%%%%   
%%%%%%%%%%%%%%%%%%%%%%%%%%%%%%%%%%%%%%%%%%%%%%%%%%%%%%%%%%%%%%%%%%%%%%%%%%
%%%%%%%%%%%%%%%%%%%%%%%%%%%%%%%%%%%%%%%%%%%%%%%%%%%%%%%%%%%%%%%%%%%%%%%%%%

\section{Acknowledgement}

The author would like to thank Professor Paul Terwilliger for many valuable ideas and insightful suggestions on my work.
This paper was written while the author was an Honorary Fellow at the University of Wisconsin-Madison (January 2017 -- January 2018) supported by  the Development and Promotion of Science and Technology Talents (DPST)
Project, Thailand.

%%%%%%%%%%%%%%%%%%%%%%%%%%%%%%%%%%%%%%%%%%%%%%%%%%%%%%%%%%%%%%%%%%%%%%%%%%
%%%%%%%%%%%%%%%%%%%%%%%%%%%%%%%%%%%%%%%%%%%%%%%%%%%%%%%%%%%%%%%%%%%%%%%%%%
%%%%%%%%%%%%% 												 %%%%%%%%%%%%%
%%%%%%%%%%%%%			   	    References  	  		     %%%%%%%%%%%%%	
%%%%%%%%%%%%% 										         %%%%%%%%%%%%%   
%%%%%%%%%%%%%%%%%%%%%%%%%%%%%%%%%%%%%%%%%%%%%%%%%%%%%%%%%%%%%%%%%%%%%%%%%%
%%%%%%%%%%%%%%%%%%%%%%%%%%%%%%%%%%%%%%%%%%%%%%%%%%%%%%%%%%%%%%%%%%%%%%%%%%

\end{document}